\newcommand{\monthyear}[1]{%
  \def\@monthyear{\uppercase{#1}}}
\newcommand{\volnumber}[1]{%
  \def\@volnumber{\uppercase{#1}}}
\def\ps@plain{\ps@empty
  \def\@oddfoot{\@monthyear \hfil \thepage}%
  \def\@evenfoot{\thepage \hfil \@volnumber}}
\def\ps@firstpage{\ps@plain}
\def\ps@headings{\ps@empty
  \def\@evenhead{%
    \setTrue{runhead}%
    \def\thanks{\protect\thanks@warning}%
    \uppercase{The Fibonacci Quarterly}\hfil}%
  \def\@oddhead{%
    \setTrue{runhead}%
    \def\thanks{\protect\thanks@warning}%
    \hfill\uppercase{An Infinite 2-Dimensional Array}}%
  \let\@mkboth\markboth
  \def\@evenfoot{%
    \thepage \hfil \@volnumber}%
  \def\@oddfoot{%
    \@monthyear \hfil \thepage}%
  }%
\theoremstyle{plain}
\numberwithin{equation}{section}
\newtheorem{thm}{Theorem}[section]
\newtheorem{theorem}[thm]{Theorem}
\newtheorem{lemma}[thm]{Lemma}
\newtheorem{example}[thm]{Example}
\newtheorem{definition}[thm]{Definition}
\newtheorem{comment}[thm]{Comment}
\newtheorem{conjecture}[thm]{Conjecture}
\newtheorem{corollary}[thm]{Corollary}
\begin{document}
\monthyear{Month Year}
\volnumber{Volume, Number}
\setcounter{page}{1}

\title{An Infinite 2-Dimensional Array Associated With Electric Circuits}
\author{Emily Evans}
\address{Brigham Young University}
\email{EJEvans@math.byu.edu}
\author{Russell Jay Hendel}
\address{Towson University}
\email{RHendel@Towson.Edu}

\begin{abstract}
Except for Koshy who devotes seven pages to applications of Fibonacci Numbers to electric circuits, most books and the Fibonacci Quarterly have been relatively silent on applications of graphs and electric circuits to Fibonacci numbers. This paper continues a recent trend of papers studying the interplay of graphs, circuits, and Fibonacci numbers by presenting and studying the Circuit Array, an infinite 2-dimensional array whose entries are electric resistances labelling edge values of circuits associated with a family of graphs. The Circuit Array has several features distinguishing it from other more familiar arrays such as the Binomial Array and Wythoff Array. For example, it can be proven modulo a strongly supported conjecture that the numerators of its left-most diagonal do not satisfy any linear, homogeneous, recursion, with constant coefficients (LHRCC).  However, we conjecture with supporting numerical evidence an asymptotic formula involving $\pi$ satisfied by the left-most diagonal of the Circuit Array.
\end{abstract}

\maketitle

\section{Electrical Circuits, Linear 2-trees, and Fibonacci Numbers}\label{sec:s1_circuits}

Koshy \cite[pp. 43-49]{Koshy} lists applications of electrical circuits yielding interesting Fibonacci identities.  However, aside from this, most books and as well as the issues of the Fibonacci Quarterly have been mostly silent on this application.

To begin our review of the recent literature, which has renewed interest in this application, first, recall one modern graph metric, effective resistance, requires that the graph be represented as an electric circuit with edges in the graph represented by resistors. Figure \ref{fig:pawcircuit} illustrates this.

\begin{figure}
    \centering
    \begin{tabular}{||c| c||}
    \hline
\begin{tikzpicture}
\draw [line width=1pt,color=black] (0,0)--  (0,3)--(2.60,1.5) -- (0,0);
\draw [line width=1pt,color=black]   (2.60,1.5) -- (5.60,1.5);
\coordinate (A)  at (0,0); 
\coordinate (B)   at (0,3);
\coordinate (C)   at (2.60,1.5);
\coordinate (D)   at (5.60,1.5);

\node at (A) [left] {A};
\node at (B) [left] {B};
\node at (C) [above] {C};
\node at (D) [above] {D};
\draw [fill=black] (0,3.) circle (2pt);
\draw [fill=black] (0,0) circle (2pt);
\draw [fill=black] (2.60,1.5) circle (2pt);
\draw [fill=black] (5.60,1.5) circle (2pt);
\node [below] at (2.8,-.7) {Panel A - Paw graph};;

\end{tikzpicture}&
\begin{circuitikz}
\draw (0,0) to [R, *-*] (0,3); 
\draw (0,3) to [R, *-*] (2.60,1.5); 
\draw (2.60,1.5) to [R, *-*] (0,0); 
\draw (2.60,1.5) to [R, *-*] (5.60,1.5); 
\coordinate (A)  at (0,0); 
\coordinate (B)   at (0,3);
\coordinate (C)   at (2.60,1.5);
\coordinate (D)   at (5.60,1.5);

\node at (A) [left] {A};
\node at (B) [left] {B};
\node at (C) [above] {C};
\node at (D) [above] {D};

\node [below] at (2.8,-.7) {Panel B - Paw graph as a circuit};;

\end{circuitikz}\\
\hline
\end{tabular}
\caption{Illustration of a graph and its associated circuit. }\label{fig:pawcircuit}
\end{figure}
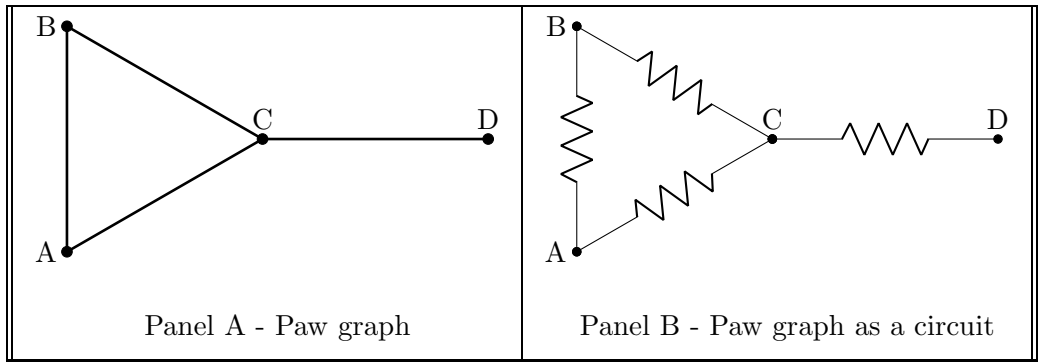

Several papers \cite{Barrett9, Barrett0} have explored effective resistances in electrical circuits whose underlying graphs are so-called linear 2-trees. In addition to showing that these effective resistances are rational functions of Fibonacci numbers, these circuits naturally give rise to interesting and new Fibonacci identities. For example the identities

\begin{equation}\label{eq:fibid1}
    \sum_{i = 1}^{m} \frac{F_i F_{i+1}}{L_i L_{i+1}} = \frac{(m+1) L_{m+1} - F_{m+1}}{5 L_{m+1}}, \quad\text{for $m \geq 1$,}
\end{equation}
and
for $k=3, 4, \dots, n-2$,
\begin{equation}\label{eq:wayne1a}
\sum_{j=3}^k {[(-1)^j F_{n-2j+1}(F_{n}+F_{j-2}F_{n-j-1})]}=-F_{k-2}F_{k+1}F_{n-k-2}F_{n+1-k}.
\end{equation}

To appreciate these recent contributions we provide additional background.
Effective resistance, also termed resistance distance in the literature, is a graph metric whose definition was motivated by the consideration of a graph as an electrical circuit.  More formally, given a graph, we determine the effective resistance between any two vertices in that graph by assuming that the graph represents an electrical circuit with resistances on each edge. Given any two vertices labeled $i$ and $j$ for convenience assume that one unit of current flows into vertex $i$ and one unit of current flows out of vertex $j$.  The potential difference $v_i - v_j$ between nodes $i$ and $j$ needed to maintain this current is the {\it effective resistance} between $i$ and $j$. Figure \ref{fig:pawcircuit} illustrates this.

Recent prior works~\cite{Barrett9, Barrett0, Barrett0b}, study effective resistance in a class of graphs termed {\it linear 2-trees}, also known as 2-paths, which we now define and illustrate. 
\begin{definition}\label{def:2tree}
In graph--theoretic language, a 2-tree is defined inductively as follows
\begin{enumerate}
    \item $K_3$ is a 2-tree.
    \item If $G$ is a 2-tree, the graph obtained by inserting a vertex adjacent to the two vertices of an edge of $G$ is a 2-tree.
\end{enumerate}
 A linear $2$-tree (or $2$-path) is a $2$-tree in which exactly two vertices have degree $2$.  For an illustration of two sample linear 2--trees see Figure~\ref{fig:2tree}.
\end{definition}
 
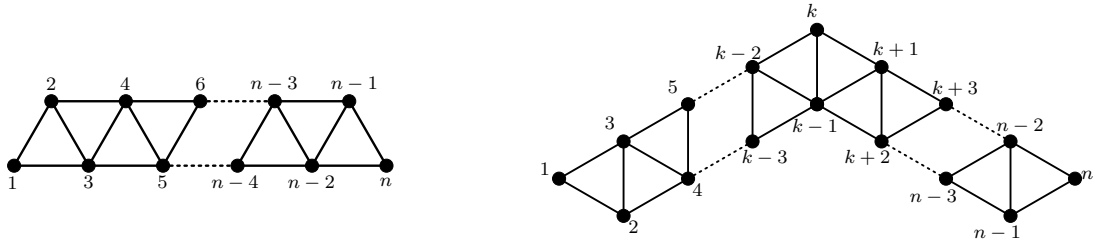
\begin{figure}[ht!]
\begin{center}
\scalebox{.9}{
\begin{tikzpicture}[line cap=round,line join=round,>=triangle 45,x=1.0cm,y=1.0cm,scale = 1.1]
\draw [line width=1.pt] (-3.,0.)-- (-2.,0.);
\draw [line width=1.pt] (-2.,0.)-- (-1.,0.);
\draw [line width=1.pt,dotted] (-1.,0.)-- (0.,0.);
\draw [line width=1.pt] (0.,0.)-- (1.,0.);
\draw [line width=1.pt] (1.,0.)-- (2.,0.);
\draw [line width=1.pt] (2.,0.)-- (1.5,0.866025403784435);
\draw [line width=1.pt] (1.5,0.866025403784435)-- (1.,0.);
\draw [line width=1.pt] (1.,0.)-- (0.5,0.8660254037844366);
\draw [line width=1.pt] (0.5,0.8660254037844366)-- (0.,0.);
\draw [line width=1.pt] (-0.5,0.8660254037844378)-- (-1.,0.);
\draw [line width=1.pt] (-1.,0.)-- (-1.5,0.8660254037844385);
\draw [line width=1.pt] (-1.5,0.8660254037844385)-- (-2.,0.);
\draw [line width=1.pt] (-2.,0.)-- (-2.5,0.8660254037844388);
\draw [line width=1.pt] (-2.5,0.8660254037844388)-- (-3.,0.);
\draw [line width=1.pt] (-2.5,0.8660254037844388)-- (-1.5,0.8660254037844385);
\draw [line width=1.pt] (-1.5,0.8660254037844385)-- (-0.5,0.8660254037844378);
\draw [line width=1.pt,dotted] (-0.5,0.8660254037844378)-- (0.5,0.8660254037844366);
\draw [line width=1.pt] (0.5,0.8660254037844366)-- (1.5,0.866025403784435);
\begin{scriptsize}
\draw [fill=black] (-3.,0.) circle (2.5pt);
\draw[color=black] (-3.02279181666165,-0.22431183338253265) node {$1$};
\draw [fill=black] (-2.,0.) circle (2.5pt);
\draw[color=black] (-2.0001954862580344,-0.22395896857501957) node {$3$};
\draw [fill=black] (-2.5,0.8660254037844388) circle (2.5pt);
\draw[color=black] (-2.5018465162673555,1.100526386601008717) node {$2$};
\draw [fill=black] (-1.5,0.8660254037844385) circle (2.5pt);
\draw[color=black] (-1.5081915914412003,1.100526386601008717) node {$4$};
\draw [fill=black] (-1.,0.) circle (2.5pt);
\draw[color=black] (-1.0065405614318794,-0.22290037415248035) node {$5$};
\draw [fill=black] (-0.5,0.8660254037844378) circle (2.5pt);
\draw[color=black] (-0.4952423962300715,1.100526386601008717) node {$6$};
\draw [fill=black] (0.,0.) circle (2.5pt);
\draw[color=black] (-0.03217990699069834,-0.22431183338253265) node {$n-4$};
\draw [fill=black] (0.5,0.8660254037844366) circle (2.5pt);
\draw[color=black] (0.4887653934035965,1.103344389715898) node {$n-3$};
\draw [fill=black] (1.,0.) circle (2.5pt);
\draw[color=black] (0.9904164234129174,-0.22431183338253265) node {$n-2$};
\draw [fill=black] (1.5,0.866025403784435) circle (2.5pt);
\draw[color=black] (1.5692445349621338,1.1042991524908385) node {$n-1$};
\draw [fill=black] (2.,0.) circle (2.5pt);
\draw[color=black] (1.993718483431559,-0.22431183338253265) node {$n$};
\draw[color=black] (1.993718483431559,-1) node {};
\end{scriptsize}
\end{tikzpicture}}
\qquad \qquad 
\scalebox{.9}{
\begin{tikzpicture}[line cap=round,line join=round,>=triangle 45,x=1.0cm,y=1.0cm,scale = 1.1]
\draw [line width=.8pt] (-5.464101615137757,2.)-- (-4.598076211353318,1.5);
\draw [line width=.8pt] (-4.598076211353318,1.5)-- (-4.598076211353318,2.5);
\draw [line width=.8pt] (-4.598076211353318,2.5)-- (-3.732050807568879,2.);
\draw [line width=.8pt] (-3.732050807568879,2.)-- (-3.7320508075688785,3.);
\draw [line width=.8pt] (-2.8660254037844393,2.5)-- (-2.866025403784439,3.5);
\draw [line width=.8pt] (-2.866025403784439,3.5)-- (-2.,3.);
\draw [line width=.8pt] (-2.,3.)-- (-2.,4.);
\draw [line width=.8pt] (-2.,4.)-- (-1.1339745962155612,3.5);
\draw [line width=.8pt] (-1.1339745962155612,3.5)-- (-2.,3.);
\draw [line width=.8pt] (-2.,3.)-- (-1.1339745962155616,2.5);
\draw [line width=.8pt] (-1.1339745962155616,2.5)-- (-1.1339745962155612,3.5);
\draw [line width=.8pt] (-1.1339745962155612,3.5)-- (-0.2679491924311225,3.);
\draw [line width=.8pt] (-0.2679491924311225,3.)-- (-1.1339745962155616,2.5);
\draw [line width=.8pt,dotted] (-1.1339745962155616,2.5)-- (-0.26794919243112303,2.);
\draw [line width=.8pt,dotted] (-0.2679491924311225,3.)-- (0.5980762113533165,2.5);
\draw [line width=.8pt] (0.5980762113533165,2.5)-- (-0.26794919243112303,2.);
\draw [line width=.8pt] (-0.26794919243112303,2.)-- (0.5980762113533152,1.5);
\draw [line width=.8pt] (0.5980762113533152,1.5)-- (0.5980762113533165,2.5);
\draw [line width=.8pt] (0.5980762113533165,2.5)-- (1.464101615137755,2.);
\draw [line width=.8pt] (1.464101615137755,2.)-- (0.5980762113533152,1.5);
\draw [line width=.8pt] (-2.,4.)-- (-2.866025403784439,3.5);
\draw [line width=.8pt,dotted] (-2.866025403784439,3.5)-- (-3.7320508075688785,3.);
\draw [line width=.8pt] (-3.7320508075688785,3.)-- (-4.598076211353318,2.5);
\draw [line width=.8pt] (-4.598076211353318,2.5)-- (-5.464101615137757,2.);
\draw [line width=.8pt] (-4.598076211353318,1.5)-- (-3.732050807568879,2.);
\draw [line width=.8pt,dotted] (-3.732050807568879,2.)-- (-2.8660254037844393,2.5);
\draw [line width=.8pt] (-2.8660254037844393,2.5)-- (-2.,3.);
\begin{scriptsize}
\draw [fill=black] (-2.,4.) circle (2.5pt);
\draw[color=black] (-2.06648828953202,4.259331085745072) node {$k$};
\draw [fill=black] (-1.1339745962155612,3.5) circle (2.5pt);
\draw[color=black] (-0.9407359844212153,3.7428094398707032) node {$k+1$};
\draw [fill=black] (-2.,3.) circle (2.5pt);
\draw[color=black] (-2.0267558552339913,2.6989231016718021) node {$k-1$};
\draw [fill=black] (-2.866025403784439,3.5) circle (2.5pt);
\draw[color=black] (-3.0597991469827295,3.689832860806665) node {$k-2$};
\draw [fill=black] (-2.8660254037844393,2.5) circle (2.5pt);
\draw[color=black] (-2.702207238300474,2.2919066737377372) node {$k-3$};
\draw [fill=black] (-1.1339745962155616,2.5) circle (2.5pt);
\draw[color=black] (-1.3248161826354898,2.2919066737377372) node {$k+2$};
\draw [fill=black] (-2.,3.) circle (2.5pt);
\draw [fill=black] (-0.2679491924311225,3.) circle (2.5pt);
\draw[color=black] (-0.14608729846064736,3.213043649230324) node {$k+3$};
\draw [fill=black] (-3.7320508075688785,3.) circle (2.5pt);
\draw[color=black] (-3.9339127015393545,3.2395319387623434) node {$5$};
\draw [fill=black] (-3.732050807568879,2.) circle (2.5pt);
\draw[color=black] (-3.6028090823891175,1.8488967383313488) node {$4$};
\draw [fill=black] (-0.26794919243112303,2.) circle (2.5pt);
\draw[color=black] (-0.4374584833128556,1.7488967383313488) node {$n-3$};
\draw [fill=black] (-4.598076211353318,2.5) circle (2.5pt);
\draw[color=black] (-4.78153796656396,2.7494985824199927) node {$3$};
\draw [fill=black] (-4.598076211353318,1.5) circle (2.5pt);
\draw[color=black] (-4.463678492179733,1.345619237222989) node {$2$};
\draw [fill=black] (-5.464101615137757,2.) circle (2.5pt);
\draw[color=black] (-5.655651521120585,2.1270237784175476) node {$1$};
\draw [fill=black] (0.5980762113533165,2.5) circle (2.5pt);
\draw[color=black] (0.7280262560959774,2.709766148121964) node {$n-2$};
\draw [fill=black] (0.5980762113533152,1.5) circle (2.5pt);
\draw[color=black] (0.4234109264777597,1.2721075267550078) node {$n-1$};
\draw [fill=black] (1.464101615137755,2.) circle (2.5pt);
\draw[color=black] (1.628628100184621,2.0475589098214906) node {$n$};
\end{scriptsize}
\end{tikzpicture}}
\end{center}
\caption{On the left, a straight linear 2-tree with $n$ vertices. On the right, a linear 2-tree with $n$ vertices and single bend at vertex $k$. }
\label{fig:2tree}
\end{figure}


 
 In \cite{Barrett9} network transformations (identical to those found in Section~\ref{sec:s2_basics}) were used to determine the effective resistance in a linear 2-tree with $n$ vertices; the following results were obtained.
\begin{theorem}~\cite[Th. 20]{Barrett9}\label{thm:sl2t}
Let $S_n$ be the straight linear 2-tree on $n$ vertices labeled as in the graph on the left in  Figure~\ref{fig:2tree}. Then for any two vertices $u$ and $v$ of $S_n$ with $u < v$, 
\begin{equation}
r_{S_n}(u,v)=\frac{\sum_{i=1}^{v-u} (F_i F_{i+2u-2}-F_{i-1} F_{i+2u-3})F_{2n-2i-2u+1}}{F_{2n-2}}. \label{eq:resdiststraightsum}
\end{equation}
or equivalently in closed form 

\begin{multline*}r_{S_n}(u,v)	= \frac{F_{m+1}^2+F_{v-u}^2F_{m-2j-v+u+3}^2}{F_{2m+2}}\\
+\frac{F_{m+1}\left[{F_{m-v+u}}((v-u)L_k-F_{v-u})+{F_{m-v+u+1}}\left((v-u-5)F_{v-u+1}+(2v-2u+2)F_{v-u}\right)\right]}{5F_{2m+2}}\end{multline*}
\noindent where $F_p$ is the $p$th Fibonacci number and $L_q$ is the $q$th Lucas number.

\end{theorem}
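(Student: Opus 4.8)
The plan is to model $S_n$ as a resistor network with unit resistance on every edge and reduce it to a single effective resistor between $u$ and $v$ using only the series, parallel, and wye--delta moves of Section~\ref{sec:s2_basics}. The structural fact that drives everything is that a straight linear $2$-tree is a chain of triangles glued along edges, so that each consecutive pair $\{k,k+1\}$ is a $2$-separator: deleting those two vertices disconnects $\{1,\dots,k-1\}$ from $\{k+2,\dots,n\}$. Consequently any initial or terminal block can be collapsed into an equivalent two-terminal resistor, and eliminating a degree-$2$ vertex $w$ adjacent to $x,y$ (where $xy$ is already an edge) simply puts the series path $x$--$w$--$y$ in parallel with the edge $xy$. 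Iterating this local move produces a continued-fraction recursion whose solutions are ratios of consecutive Fibonacci numbers; this is the mechanism by which the $F$'s appear, and it is also the origin of the denominator $F_{2n-2}$, which is the spanning-tree count of $S_n$ (satisfying $F_{2n-2}=3F_{2n-4}-F_{2n-6}$).

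First I would dispose of the two ``tails'' outside $[u,v]$, which carry no injected current. Starting from the far right I would eliminate vertices $n,n-1,\dots,v+1$ in that order; each is degree $2$ at the moment it is removed, so every step is a series-then-parallel reduction and the network stays series--parallel. The cumulative effect of the right tail is absorbed into a single reweighted boundary edge near $v$, with resistance an explicit Fibonacci ratio obtained by a short induction on the tail length. Mirroring this on the left collapses $1,\dots,u-1$ into a reweighted boundary edge near $u$. What remains is the middle block on $\{u,u+1,\dots,v\}$, a linear $2$-tree of length $v-u$ with two known terminating resistors, and the problem has been decoupled into a clean two-terminal computation.

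Next I would reduce this middle block by eliminating the interior vertices $u+1,\dots,v-1$, which is permissible since they are not terminals. Each elimination advances the continued fraction by one step and contributes exactly one new summand; tracking the three shifted Fibonacci index families as the front moves from $u$ toward $v$ is what produces the $v-u$ terms of \eqref{eq:resdiststraightsum}, and matching the $i$-th contribution to $(F_iF_{i+2u-2}-F_{i-1}F_{i+2u-3})F_{2n-2i-2u+1}$ is the heart of the verification. I would also record that the final answer is independent of the elimination order, which is immediate from the well-definedness of effective resistance (equivalently, from invariance of the Schur complement of the Laplacian onto $\{u,v\}$) but should be stated explicitly.

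Finally, to pass from the sum to the closed form I would evaluate $\sum_{i=1}^{v-u}(F_iF_{i+2u-2}-F_{i-1}F_{i+2u-3})F_{2n-2i-2u+1}$ directly. The bracket is a product difference that a Binet computation turns into a Lucas expression of the form $\tfrac15\bigl(L_{2i+2u-3}-2(-1)^iL_{2u-2}\bigr)$; multiplying by $F_{2n-2i-2u+1}$ and applying the product identity $L_pF_q=F_{p+q}-(-1)^qF_{p-q}$ collapses the dominant piece to the constant $F_{2n-2}$ plus a term whose index is linear in $i$. Summing the constant over $i$ is what generates the $(v-u)$ factor, while the linear-index Fibonacci terms sum by standard geometric and arithmetico-geometric formulas, yielding the polynomial-times-Fibonacci and $(v-u)L_k$ pieces of the closed form. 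I expect the main obstacle to be bookkeeping rather than ideas: keeping the shifted index families aligned through the tail reductions, the interior induction, and the final summation, and then reconciling the outcome with the displayed closed form, whose auxiliary parameters $m,j,k$ must first be pinned down explicitly in terms of $n,u,v$.
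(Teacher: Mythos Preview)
This theorem is not proved in the present paper at all: it is quoted verbatim from \cite{Barrett9} as background (note the citation ``\cite[Th.~20]{Barrett9}'' attached to the theorem environment), and the surrounding text merely says that in \cite{Barrett9} ``network transformations (identical to those found in Section~\ref{sec:s2_basics}) were used'' to obtain it. So there is no proof here to compare your proposal against.

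That said, your outline is consistent with the one-line description the paper gives of the method in \cite{Barrett9}: collapse the two tails outside $[u,v]$ by repeated series--parallel moves (each tail vertex is degree $2$ at the moment of removal), then sweep through the interior vertices of the middle block, and finally massage the resulting sum with Binet/product identities. Two cautions if you actually carry this out. First, your step ``eliminating the interior vertices $u+1,\dots,v-1$'' is not a pure series--parallel move once you are inside the block: an interior vertex of the middle $2$-path has degree $4$, so a genuine $Y$--$\Delta$ (star--mesh) elimination is needed there, not just series-then-parallel; this is where the indices get delicate and where the factor $F_{2n-2i-2u+1}$ (carrying the right-tail information) must be threaded through. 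Second, the displayed closed form contains the undefined parameters $m,j,k$; before you can ``reconcile'' anything you must fix them from the original source, since the present paper does not.
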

\noindent Moreover identity~\ref{eq:fibid1} was shown.

In~\cite{Barrett0} the formulas for a straight linear 2-tree were generalized to a linear 2-tree with any number of bends.  See the graph on the right in Figure~\ref{fig:2tree} for an example of a linear 2--tree with a bend at vertex $k$.
The following result is the main result from~\cite{Barrett0} and nicely gives the effective resistance between two vertices in a bent linear 2--tree.
\begin{theorem}~\cite[Th. 3.1]{Barrett0}\label{cor:main2}
 Given a bent linear 2-tree with $n$ vertices, and $p = p_1 + p_2 + p_3$ single bends located at nodes $k_1, k_2,  \ldots, k_p$ and $k_1 < k_2 < \cdots < k_{p-1} < k_p$ the effective resistance between vertices $u$ and $v$ is given by 
	 \begin{multline}\label{eq:genericformres}
	 r_G(u,v)=r_{S_n}(u,v)-\sum_{j=p_1+1}^{p_1+p_2}\Big[F_{k_j-3}F_{k_j}-2\sum_{i=p_1+1}^{j-1}[(-1)^{k_j-k_i+1+j-i}F_{k_i}F_{k_i-3}]+2(-1)^{j+u+k_j}F_{u-1}^2\Big]\cdot\\
	 \Big[F_{n-k_j+2}F_{n-k_j-1}+2(-1)^{v-k_j}F_{n-v}^2\Big]/F_{2n-2}.
	 \end{multline}
 \end{theorem}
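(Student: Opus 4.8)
The plan is to build directly on the straight-strip formula of Theorem~\ref{thm:sl2t} and to compute the effect of switching on the bends one at a time. The natural bookkeeping device is the spanning-forest description of effective resistance: for any connected graph $H$ containing $u,v$,
\[
r_H(u,v)=\frac{N_{uv}(H)}{\tau(H)},
\]
where $\tau(H)$ is the number of spanning trees and $N_{uv}(H)$ is the number of spanning forests with exactly two components, one containing $u$ and the other $v$. Two facts make this attractive here. First, by Definition~\ref{def:2tree} every linear $2$-tree on $n$ vertices is isomorphic as an abstract graph to the straight strip $S_n=P_n^2$ (the reflection symmetry at each growth step identifies the two admissible attachments), so $\tau(G)=\tau(S_n)=F_{2n-2}$ for \emph{every} placement of bends; this already explains the common denominator $F_{2n-2}$ in both Theorem~\ref{thm:sl2t} and~\eqref{eq:genericformres}. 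Second, inspecting Figure~\ref{fig:2tree} shows that a single bend at $k$ is purely local: the incoming apex is attached to the opposite free edge of the preceding triangle, so exactly one edge of the strip is rewired. Thus $G$ is obtained from $S_n$ by finitely many local edge swaps, and the whole problem reduces to tracking how $N_{uv}$ changes under each swap.

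I would then induct on $p_2$, the number of bends lying strictly between $u$ and $v$, which is why the correction sum in~\eqref{eq:genericformres} runs only over $j=p_1+1,\dots,p_1+p_2$. The base case is the claim that the $p_1$ bends left of $u$ and the $p_3$ bends right of $v$ do not change $r_G(u,v)$. To prove it I would use that each pair $\{m-1,m\}$ is a $2$-vertex cut of the strip: cutting at $\{u-1,u\}$ and $\{v,v+1\}$ splits $G$ into a left block $L$ on $\{1,\dots,u\}$, a middle block, and a right block $R$ on $\{v,\dots,n\}$, each communicating with the rest only through a two-vertex boundary. Summing forest counts over the finitely many connectivity states of these boundaries writes $N_{uv}(G)$ as a fixed bilinear combination of boundary-weighted tree/forest counts of $L$, the middle, and $R$. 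Since $L$ and $R$ enter only as two-terminal networks, whose isomorphism type is bend-invariant, their contributions are unchanged by outside bends, and the relevant boundary counts of $L$ and $R$ are exactly the $F_{u-1}^2$ and $F_{n-v}^2$ quantities appearing as the $u$- and $v$-dependent pieces of~\eqref{eq:genericformres}. This simultaneously disposes of the outside bends and produces the boundary factors.

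For the inductive step I would add the middle bends in left-to-right order and compute, via the series, parallel, and $Y$--$\Delta$ reductions of Section~\ref{sec:s2_basics}, the change in $N_{uv}$ caused by the swap at $k_j$ on the network that already carries $k_{p_1+1},\dots,k_{j-1}$. Because the swap sits inside a single triangle, its effect factors through the two $2$-cuts flanking $k_j$: the ``left'' value seen across the cut near $k_j$ is the forest count of everything to its left, which equals the straight value $F_{k_j-3}F_{k_j}$ plus a correction contributed by each earlier bend $k_i$ plus the boundary term $2(-1)^{j+u+k_j}F_{u-1}^2$, while the ``right'' value is $F_{n-k_j+2}F_{n-k_j-1}+2(-1)^{v-k_j}F_{n-v}^2$. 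Carrying the earlier corrections through the reduction yields precisely the inner sum $-2\sum_{i=p_1+1}^{j-1}(-1)^{k_j-k_i+1+j-i}F_{k_i}F_{k_i-3}$, the alternating signs being governed by the negative Fibonacci root $\beta=(1-\sqrt5)/2$ entering through the reflection identity $F_{-m}=(-1)^{m+1}F_m$. Multiplying the left and right factors and dividing by $\tau(G)=F_{2n-2}$ gives the $j$-th summand of~\eqref{eq:genericformres}.

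The hard part will be proving that this expansion \emph{truncates}: a priori, swapping an edge on an already-bent network could create genuine three-way (and higher) interactions among bends, whereas~\eqref{eq:genericformres} contains only single-bend terms and the pairwise inner sum. I would isolate this as the crux and argue that it is forced by the width of the structure, since every interaction between bends is mediated by a $2$-vertex cut and so the coupling data is only two-dimensional. Concretely, writing each swap as a low-rank Laplacian update, reducing modulo the flanking cut so that only the surviving rank-one part remains, and applying the Sherman--Morrison--Woodbury formula across the chain, one must show that the off-diagonal blocks coupling three distinct bends vanish, collapsing the would-be triple products to pairwise ones. Establishing this vanishing while keeping the signs and the Fibonacci/Lucas bookkeeping exact is where essentially all the work lies; once it is in hand, specializing to $p_2=0$ recovers Theorem~\ref{thm:sl2t} as the required consistency check.
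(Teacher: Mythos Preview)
The paper does not contain a proof of this theorem: it is quoted verbatim as \cite[Th.~3.1]{Barrett0} and presented purely as background to motivate the paper's own work on the Circuit Array. There is therefore no ``paper's own proof'' against which to compare your attempt.

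That said, a few remarks on your outline. Your observation that every linear $2$-tree on $n$ vertices is abstractly isomorphic to $S_n$, so that $\tau(G)=F_{2n-2}$ regardless of bends, is correct and is indeed the reason for the common denominator. Your plan to induct on the number of middle bends and to use the $2$-vertex cuts $\{m-1,m\}$ to factor the forest counts is the natural one and matches in spirit the network-reduction arguments used in \cite{Barrett9,Barrett0}. The step that needs care is your base case: when you assert that the left block $L$ on $\{1,\dots,u\}$ is, as a \emph{two-terminal} network with terminals $\{u-1,u\}$, bend-invariant, you are relying on the isomorphism $L\cong S_u$ carrying the pair $\{u-1,u\}$ to the corresponding terminal pair of $S_u$. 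That is true, but it is not immediate from the bare isomorphism of unlabeled graphs; you should argue it from the inductive construction of the $2$-tree (the last-added vertex and its two neighbors are always the boundary). Finally, you correctly flag the truncation of higher interactions as the crux. Your Sherman--Morrison--Woodbury sketch is plausible, but as written it is a heuristic rather than a proof: you have not exhibited the rank-one structure after reduction modulo the flanking cuts, nor shown the claimed vanishing of the triple-interaction blocks. If you want a complete argument you will need to carry out those computations explicitly, which is essentially what \cite{Barrett0} does.
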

\noindent In addition, identity~\ref{eq:wayne1a} was shown.

This paper adds to the growing literature on electrical circuits and recursions by presenting, exploring, and proving results about an infinite array, $C_{i,j}, j \ge 1, 0 \le i \le 2(j-1),$ whose elements are electrical resistances associated with circuits defined on triangular grid graphs.

\section{Some Definitions }\label{sec:s2_basics}

  This section gathers and defines some assorted terms used throughout the paper.
 
\textbf{The (Triangular) $n$-grid.} \cite[Figure 1]{Hendel},\cite[Figure2]{EvansHendel}.
Figure \ref{fig:3grid} is illustrative of the general (triangular) $n$-grid for $n=3.$ As can be seen the $n$-grid consists of $n$ rows with $i, 1 \le i \le n,$ upright oriented triangles arranged in a triangular grid.  Triangles are labeled by row, top to bottom, and diagonal, left to right, as shown in Figure \ref{fig:3grid}.

\begin{figure}[ht!]
\begin{center}
\begin{tabular} {|c|}
\hline
\begin{tikzpicture} [xscale=1,yscale =1]

\node [below] at (3,3.2) {    }; 
 
\draw (0,0)--(1,1)--(2,2)--(3,3);
\draw (2,0)--(3,1)--(4,2);
\draw (4,0)--(5,1);

\draw (3,3)--(4,2)--(5,1)--(6,0);
\draw (2,2)--(3,1)--(4,0);
\draw (1,1)--(2,0);

\draw (0,0)--(2,0)--(4,0)--(6,0);
\draw (1,1)--(3,1)--(5,1);
\draw (2,2)--(4,2);

\begin{scriptsize}
\node  [above] at (1,.2) {$\langle 3,1 \rangle$};  
\node  [above] at (1,.2) {$\langle 3,1 \rangle$};
\node  [above] at (3,.2) {$\langle 3,2 \rangle$};
\node  [above] at (5,.2) {$\langle 3,3 \rangle$};
\node  [above] at (2,1.2) {$\langle 2,1 \rangle$};
\node  [above] at (4,1.2) {$\langle 2,2 \rangle$};
\node  [above] at (3,2.3) {$\langle 1,1 \rangle$};
\end{scriptsize}

\end{tikzpicture}
\\ \hline
\end{tabular}
\caption{A 3-grid with the upright oriented triangles labeled by row and diagonal.}\label{fig:3grid}
\end{center}
\end{figure}
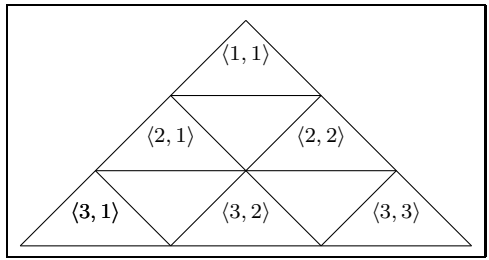

\textbf{The all-one $n$-grid.} Throughout this paper the edge labels of a graph correspond to actual resistance values. The \textit{all-one $n$-grid} refers to an $n$-grid all of whose resistance values are uniformly 1.

We use the notation $T_{r,d,e}$ to refer to the edge label of edge $e, e \in \{L,R,B\}$ (standing, respectively, for the left, right, and base edges of a triangle in the upright oriented position), of the triangle in row $r$ diagonal $d.$ Similarly, $T_{r,d}$ will refer to the triangle in row $r$ diagonal $d.$

Throughout the paper both the all--one $n$-grid and the $m$-grids derived from it ($1 \le m \le n-1$) possess vertical and rotational symmetry (when rotated by $\frac{\pi}{3}).$ \cite[Definition 9.1]{Hendel},\cite[Definition 2.11]{EvansHendel}. 

This symmetry facilitates not presenting results separately for the left, right, and base sides. Typically we will suffice with \textit{the upper left half} of a grid, \cite[Definition 9.6]{Hendel},\cite[Definition 2.12]{EvansHendel}, defined as the set of triangles, $T_{r,d}$ with
$0 \le r \le \lfloor \frac{m+1}{2} \rfloor,$
$1 \le d \le \lfloor\frac{m+2}{2}\rfloor.$

 \begin{example}\label{exa:upperlefthalf}
 If $n=3,$ (see panel A1 in Figure \ref{fig:5panels}) the upper left half consists of the 
 triangles $\langle r,d \rangle, d=1, r=1,2.$
 \end{example} 
 
 The importance of the upper left half is the following result which captures the implications of the symmetry of the $m$-grids \cite[Corollary 9.6]{Hendel},\cite[Lemma 2.14]{EvansHendel}. 
\begin{lemma} \label{lem:upperlefthalf}
For an $m$-grid, once the edge values of the upper half are known, all edge values in the $m$-grid are fixed.
\end{lemma}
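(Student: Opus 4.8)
The plan is to exhibit the upper left half as a fundamental domain for the grid's symmetry group and then transport edge values along group elements. Let $G$ denote the group generated by the vertical reflection and the order-three rotation of the $m$-grid described above; then $G\cong D_3$ has order six and acts on the set of edges $T_{r,d,e}$ of the grid. The argument rests on two facts. First, \emph{invariance}: because the all-one $m$-grid and every derived $m$-grid carries this vertical and rotational symmetry \cite[Definition 9.1]{Hendel},\cite[Definition 2.11]{EvansHendel}, the edge-value assignment is constant on each $G$-orbit, so that $T_{r,d,e}=T_{r',d',e'}$ whenever some $g\in G$ sends one edge to the other. Second, \emph{surjectivity onto orbits}: every edge of the $m$-grid lies in the $G$-orbit of an edge belonging to a triangle $T_{r,d}$ in the upper left half. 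Granting these, the lemma is immediate: given an arbitrary edge, choose $g\in G$ carrying it back into the upper left half, and invariance forces its value to equal one already prescribed there. Hence the upper-half data determine every edge value.

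First I would make the $G$-action explicit on indices. The order-three rotation cyclically permutes the three corner regions of the large triangle and, correspondingly, the edge-roles $L\to B\to R\to L$ of each upright triangle, while the vertical reflection fixes the $B$-role, swaps $L\leftrightarrow R$, and reflects the diagonal index $d\mapsto r+1-d$ within row $r$. I would then verify that a suitable rotation moves any triangle into the upper rows $0\le r\le\lfloor\frac{m+1}{2}\rfloor$ and that the reflection pushes its diagonal index into the left range $1\le d\le\lfloor\frac{m+2}{2}\rfloor$, while the induced permutation of $\{L,R,B\}$ lets us realize each edge-role from a representative in this region. A short count, matching the number of $G$-orbits against the number of edges lying in the upper left half with a correction for edges fixed by a reflection, then confirms that the region is exactly a fundamental domain. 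This is the combinatorial content of \cite[Corollary 9.6]{Hendel} and \cite[Lemma 2.14]{EvansHendel}, which I would cite for the precise orbit bookkeeping.

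The invariance ingredient I would take from the established symmetry of the underlying grids rather than reprove it: since the edge labels are resistances computed from a graph that is itself invariant under $G$, equivalent edges receive equal labels. The hard part will be the combinatorial verification that the stated floor-function bounds single out exactly one orbit representative apiece. The delicate cases are the edges lying on an axis of symmetry, which have a stabilizer of order two and hence shorter orbits, and the parity split between even and odd $m$, where the floors $\lfloor\frac{m+1}{2}\rfloor$ and $\lfloor\frac{m+2}{2}\rfloor$ shift the boundary of the region by one. One must confirm that in each parity the region still meets every orbit without covering any twice. Once the action on $(r,d,e)$ is pinned down these checks are finite and mechanical, so the genuine obstacle is bookkeeping the order-three rotation in the row/diagonal coordinates, which are adapted to a single orientation and therefore transform awkwardly under it.
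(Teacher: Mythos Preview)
The paper does not actually prove this lemma; it merely states it and cites \cite[Corollary 9.6]{Hendel} and \cite[Lemma 2.14]{EvansHendel} for the result. Your proposal is a correct sketch of precisely the symmetry argument those references carry out: the $D_3$ action on the grid, invariance of edge labels under that action, and the upper left half serving as a fundamental domain. In that sense your approach coincides with the intended one, and there is nothing to compare beyond noting that you have unpacked what the paper leaves to a citation.

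One small caution on the bookkeeping you flag at the end: the paper records the rotational symmetry as rotation by $\tfrac{\pi}{3}$, but the large upright triangle is carried to itself only under rotation by $\tfrac{2\pi}{3}$, so when you write out the index action explicitly you should use the order-three rotation (as you already do) and not be misled by the $\tfrac{\pi}{3}$ in the text. Otherwise the delicate points you identify---edges on a reflection axis with short orbits, and the even/odd parity of $m$ shifting the floor-defined boundary---are exactly the places where the orbit count needs care, and your plan to handle them by direct case analysis in the $(r,d,e)$ coordinates is the right one.
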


\textbf{Corners.} \cite[Equation (29)]{Hendel},\cite[Definition 2.15]{EvansHendel}. Graph--theoretically, a triangle is a corner of an $m$-grid if it has a degree-2 vertex. The 3 corner triangles of an $m$-grid are located at 
$T_{1,1}, T_{m,1}, T_{m,m}.$  For example, for the 3-grid on Figure \ref{fig:3grid}, the three corners are located at $\langle 1,1 \rangle, \langle 3,1 \rangle, \langle 3,3 \rangle.$

\section{The Three Circuit Transformations}\label{sec:circuitfunctions}

As pointed out in Section \ref{sec:s1_circuits},
every circuit has associated with it an underlying labeled graph whose edge labels are electrical resistances. Therefore, to specify an \textit{equivalent circuit transformation} from an initial parent circuit to a transformed child circuit we must specify the vertex, edge, and label transformations. By equivalent circuit transformation we mean one that maintains the effective resistance between vertices that appear in both parent and child circuit. There are three basic circuit transformations that we use that preserve effective resistance: \textit{series, $\Delta-
Y$, and $Y-\Delta.$} Figure \ref{fig:seriesparallel} illustrates the series transformation . 

 The following are the key points about this transformation.
\begin{itemize}
    \item The top parent graph has 3 nodes and 2 edges
    \item The transformed child graph below has 2 nodes and one edge
    \item There is a formula,\cite[pg. 43]{Koshy} $R_1+R_2$ giving the edge label of the child graph in terms of the edge labels of the parent graph.
\end{itemize}

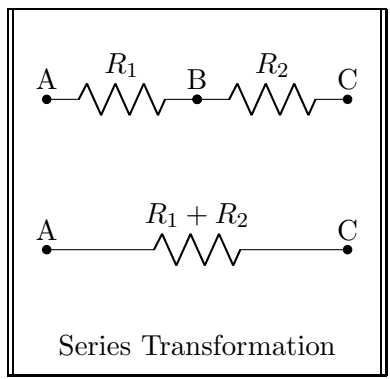
\begin{figure}[ht!]
\begin{center}
\begin{tabular}{||c||}
\hline
\begin{circuitikz}

\node [below] at (3,3.2) {    };
\node at (0,3) { };

\draw (0,2) to [R, *-*,l=$R_1$] (2,2); 
\draw (2,2) to [R, *-*,l=$R_2$] (4,2); 
\coordinate (A)  at (0,2); 
\coordinate (B)   at (2,2);
\coordinate (C)   at (4,2);

\node at (A) [above] {A};
\node at (B) [above] {B};
\node at (C) [above] {C};

\draw (0,0) to [R, *-*,l=$R_1+R_2$] (4,0); 
\coordinate  (AA) at (0,0);  
\coordinate (CC) at (4,0);

\node at (AA) [above] {A};
\node at (CC)[above] {C};

\node [below] at (2,-1) {Series Transformation};;
\end{circuitikz} 
\\ \hline

\end{tabular}

\end{center}
\caption{Illustration of the series transformations. See narrative for further details. }\label{fig:seriesparallel}
\end{figure}

The remaining two circuit transformations are the $\Delta-Y$ transformation which transforms a parent simple 3-edge loop to a claw (3-edge outstar), and the $Y-\Delta$ transformation which takes a claw to a 3-edge loop,\cite[Figure 2]{Hendel}, \cite[Definition 2.4]{EvansHendel}. The relevant transformation functions are
\begin{equation}\label{equ:deltay} \Delta(x,y,z) = \frac{xy}{x+y+z}; \qquad 
Y(x,y,z) = \frac{xy+yz+zx}{x}.\end{equation}

Following the computations presented in this paper will not require details of these transformations or how the order of arguments relates to the underlying graphs.  To follow the computations needed in this paper it suffices to know the four circuit transformation functions presented in Section \ref{sec:proofmethods}.

\section{The Reduction Algorithm}\label{sec:reduction}

This section presents the basic reduction algorithm. This algorithm was first presented in \cite[pg. 18]{Barrett0}  where the algorithm was used for purposes of proof but not used computationally, since computations were done using the combinatorial Laplacian. Hendel \cite[Definition 2.3,Figure 3]{Hendel}  was the first to use the reduction algorithm computationally.  Moreover, \cite[Algorithm 2.8, Figure 3 and Section 4]{EvansHendel}  was the first to show that four transformation functions suffice for all computations. These four circuit transformation functions will be presented in Section \ref{sec:proofmethods}; knowledge of them suffices to follow, and be able to reproduce, all computations presented in this paper.  The usefulness of this algorithm in uncovering patterns is alluded to in \cite{Hendel, EvansHendel}.

We begin the presentation of the four circuit transformations with some basic illustrations.

\color{black}
The reduction algorithm takes a parent $m$ grid and \textit{reduces} it, by removing one row of triangles, to a child $m-1$ grid.
Figure \ref{fig:5panels}, illustrates the five steps in reducing the 3 grid (Panel A) to a two grid (Panel E), \cite[Steps A-E, Figure 3]{Hendel}, \cite[Algorithm 2.8]{EvansHendel}  
\begin{itemize}
    \item Step 1 - Panel A: Start with a labeled 3-grid
    \item Step 2 - Panel B: Apply a $\Delta-Y$ transformation to each upright triangle (a 3-loop) resulting in a grid of 3 rows of 3-stars, as shown.
    \item Step 3 - Panel C: Discard the corner tails, edges with a vertex of degree one. This does not affect the resistance labels of edges in the reduced two grid in panel E. (However, these corner tails are useful for computing effective resistance as shown in
    \cite{Barrett0,Evans2022}).
    \item Step 4 - Panel D: Perform series transformations on all consecutive pairs of boundary edges (i.e., the dashed edges in panel C).
    \item Step 5 - Panel E: Apply $Y-\Delta$ transformations to all remaining claws, transforming them into 3-loops.
\end{itemize}

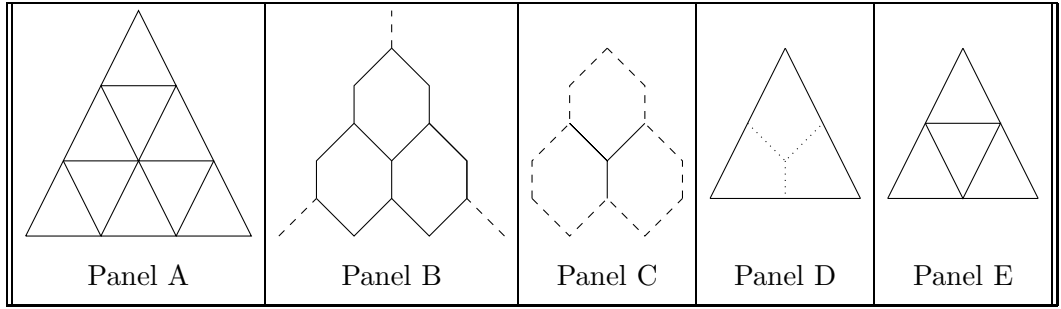
\begin{figure}[ht!]
\begin{center}
\begin{tabular}{||c|c|c|c|c||}
\hline
\begin{tikzpicture}[yscale=.5,xscale=.5]

\node [below] at (3,3.2) {    }; 
 
\draw (0,0)--(1,2)--(2,0)--(0,0);
\draw (2,0)--(3,2)--(4,0)--(2,0);
\draw (4,0)--(5,2)--(6,0)--(4,0); 
\draw (1,2)--(2,4)--(3,2)--(1,2);
\draw (3,2)--(4,4)--(5,2)--(3,2);
\draw (2,4)--(3,6)--(4,4)--(2,4);

\node [below] at (3,6.2) {     };
\node [below] at (3,-.5) {Panel A};
\end{tikzpicture} 
& 
\begin{tikzpicture}[xscale=.5,yscale=.5] 
\draw  (1,1)--(1,2)--(2,3)--(2,4)--(3,5);
\draw  (5,1)--(5,2)--(4,3)--(4,4)--(3,5);
\draw (1,1)--(2,0)--(3,1)--(3,2)--(2,3);
\draw (3,1)--(4,0)--(5,1)--(5,2)--(4,3)--(3,2); 
 
\draw [dashed] (6,0)--(5,1);
\draw [dashed] (0,0)--(1,1);
 \draw [dashed] (3,5)--(3,6);
 
\node [below] at (3,-.5) {Panel B};
\end{tikzpicture}&
\begin{tikzpicture}[yscale=.5,xscale=.5]
 
\draw [dashed] (1,1)--(1,2)--(2,3)--
(2,4)--(3,5);
\draw [dashed] (5,1)--(5,2)--(4,3)--(4,4)--(3,5);
\draw [dashed] (1,1)--(2,0)--(3,1)--(4,0)--(5,1);
\draw (3,1)--(3,2)--(2,3)--(3,2)--(4,3);

\node [below] at (3,-.5) {Panel C};
 
\end{tikzpicture}&
\begin{tikzpicture}[yscale=.5,xscale=.5]
 
\draw (1,1)--(2,3)--(3,5);
\draw (5,1)--(4,3)--(3,5);
\draw (1,1)--(3,1)--(5,1);
\draw [dotted] (2,3)--(3,2)--(4,3);
\draw [dotted] (3,2)--(3,1); 
 
 \node [below] at (3,-.5) {Panel D};
\end{tikzpicture}&
\begin{tikzpicture}[yscale=.5,xscale=.5]
 
\draw (1,1)--(2,3)--(3,5);
\draw (5,1)--(4,3)--(3,5);
\draw (1,1)--(3,1)--(5,1);
\draw (2,3)--(4,3)--(3,1)--(2,3); 
 
 \node [below] at (3,-.5) {Panel E};
 
\end{tikzpicture}
\\ \hline
\end{tabular}

\end{center}
\caption{Illustration of the reduction algorithm, on a 3-grid. The panel labels correspond to the five steps indicated in the narrative.}\label{fig:5panels}
\end{figure}
\color{black}
 
The important point here is that each of the five steps involves specific circuit transformations. However, to follow, and be able to reproduce the computations in this paper, only the four circuit transformation functions presented in the next section are needed. The derivation of these four circuit transformation functions is not needed and has been given in detail in the references cited. An example at the end of this section illustrates what is needed.  

In the sequel, we will typically start with an all--one $n$-grid and successively apply the reduction algorithm resulting in a collection of $m$ grids, $1 \le m \le n-1.$ The notation
\begin{multline*} T_{r,d,X}^m, X \in \{L,R,B, LR\} \text{ indicates the resistance label of side $X$}\\ \text{in triangle $T_{r,d}$ of the all--one $n$-grid reduced $m$ times}\\ \text{The symbol LR will be used in a context} \text{ when the side depends on the parity of a parameter. }
\end{multline*}

Additionally, if we deal with a single reduction we may use the superscripts $p,c$ to distinguish between the parent grid and the child grid when the actual number of reductions used is not important.
 
\begin{example}\label{exa:suffices} Referring to Figure \ref{fig:5panels}, the function \emph{left} presented in the next section takes the 9 resistance edge-labels of triangles  $T_{2,1}^p, T_{2,2}^p,T_{3,2}^p$ in the parent 3-grid in Panel A and computes the resistance edge-value, $T_{2,2,L}^c$ of the child 2-grid in Panel E. Thus the four transformation functions of the next section suffice to verify and reproduce the computations in this paper.
\end{example}

\section{The Four Transformation Functions.}\label{sec:proofmethods}

 As mentioned in Example \ref{exa:suffices} and the surrounding narrative, this section presents the four circuit transformation functions that suffice to follow and reproduce the computations presented in this paper  \cite[Section 4]{EvansHendel}:
\begin{itemize}
    \item Boundary edges 
    \item Base (non-boundary) edges 
    \item Right (non-boundary)edges 
    \item Left (non-boundary) edges 
\end{itemize}

We begin our description of the four transformation functions with the base edge case. Illustrations are based on Figure \ref{fig:3grid}.  We first illustrate with the base edge of the top corner triangle in Figure \ref{fig:3grid} and then generalize. Note, that the $\Delta$ and $Y$ functions have been defined in \eqref{equ:deltay}. 

We have
$$
T_{1,1,B}^c =
    Y(\Delta(T_{3,2,L}^{p},
            T_{3,2,R}^{p},
            T_{3,2,B}^{p}),
            \Delta(T_{2,1,R}^{p},
            T_{2,1,B}^{p},
            T_{2,1,L}^{p}),
             \Delta(T_{2,2,B}^{p},
            T_{2,2,L}^{p},
            T_{2,2,R}^{p})).
$$
This is a function of 9 variables. At times it becomes convenient to emphasize the triangles involved. We will use the following notation to indicate the dependency on triangles.
$$
T_{1,1,B}^c =F(T_{3,2}^{p},
                T_{2,1}^{p},
                T_{2,2}^{p}),
$$
which is interpreted as saying \textit{the base edge of $T_{1,1}^c$ is some function ($F$) of the edge-labels of the triangles $T_{3,2}^p,T_{2,1}^p,T_{2,2}^p.$} 
Clearly, this notation is mnemonical and cannot be used computationally. It is however very useful in proofs as will be seen later.

The previous two equations can be generalized to an arbitrary $m$-grid and arbitrary row and diagonal (with minor constraints, $r+2 \le n, d+1 \le r,$ on the row and diagonal).  We have 
\begin{multline*}
T_{r,d,B}^c =
    Y(\Delta(T_{r+2,d+1,L}^{p},
            T_{r+2,d+1,R}^{p},
            T_{r+2,d+1,B}^{p}),
            \Delta(T_{r+1,d,R}^{p},
            T_{r+1,d,B}^{p},
            T_{r+1,d,L}^{p}),\\
             \Delta(T_{r+1,dr+1,B}^{p},
        T_{r+1,d+1,L}^{p},
            T_{r+1,d+1,R}^{p})).
\end{multline*}
and
$$
T_{r,d,B}^c =F(T_{r+2,d+1}^{p},
                T_{r+1,d}^{p},
                T_{r+1,d+1}^{p}).
$$

We next list the remaining three transformation functions. 

For $r+2 \le n, d+1 \le r,$ for a boundary left edge we have 
 
\begin{equation*}
T_{r,1,L}^c =
    \Delta(T_{r,1,B}^{p},
            T_{r,1,L}^{p},
            T_{r,1,R}^{p})+
            \Delta(T_{r+1,1,L}^{p},
            T_{r+1,1,R}^{p},
            T_{r+1,1,B}^{p}) ,
\end{equation*}
and  
$$
T_{r,1,L}^c =F(T_{r,1}^{p},
                T_{r+1,1}^{p}).
$$

For $r+2 \le n, d+1 \le r,$ for non boundary left edges we have,
    \begin{multline}\label{equ:leftside9proofs} 
T_{r,d,L}^c =
    Y(\Delta(T_{r,d-1,R}^{p},
            T_{r,d-1,B}^{p},
            T_{r,d-1,L}^{p}),
     \Delta(T_{r,d,B}^{p},
        	T_{r,d,L}^{p},
            T_{r,d,R}^{p}),\\  
            \Delta(T_{r+1,d,L}^{p},
            T_{r+1,d,R}^{p},
            T_{r+1,d,B}^{p}))
        \end{multline}
and  
\begin{equation}\label{equ:leftside3proofs}
T_{r,d,L}^c =F(T_{r,d-1}^{p},
                T_{r,d}^{p},
                T_{r+1,d}^{p}).
\end{equation}

For  $r+1  \le n,  2 \le d  \le r-1,$ for the right sides,
\begin{multline*}
T_{r,d,R}^c =
    Y(\Delta(T_{r,d,B}^{p},
            T_{r,d,L}^{p},
            T_{r,d,R}^{p}),
      \Delta(T_{r+1,d,L}^{p},
        T_{r+1,d,R}^{p},
            T_{r+1,d,B}^{p},\\      
      \Delta(T_{r,d-1,R}^{p},
            T_{r,d-1,B}^{p},
            T_{r,d-1,L}^{p}))
\end{multline*}
and 
$$
T_{r,d,R}^c =F(T_{r,d}^{p},
                T_{r+1,d}^{p},
                T_{r,d-1}^{p}).
$$

\begin{comment}
Notice that we only defined the boundary function for the left boundary ($d=1$). Similarly, notice that for example the base edge function requires $r \le n-2.$ This is not a restriction. For by Lemma \ref{lem:upperlefthalf}, once the upper left half is calculated, the remaining edge values follow by symmetry considerations. Thus the above functions with their restrictions do indeed suffice. 
\end{comment}


\section{Computational Examples}\label{sec:Appendix_A}

The four transformation functions of Section \ref{sec:proofmethods} with up to 9 arguments may appear computationally challenging. The purpose of this section is to illustrate their computational use.  Additionally, the results computed will be used both to motivate and prove the main theorem.

\subsection{One Reduction of an all--one $n$--grid}
 An all--one $n$ grid definitionaly has uniform labels of 1. Hence, we may calculate the edge resistance values in an $n-1$ grid arising from one reduction of the all--one $n$ grid as follows:
\begin{itemize}
    \item $T_{r,1,L}=\Delta(r,1,1)+\Delta(r+1,1,1)=\frac{2}{3}, 1 \le r \le n-1.$
    \item The preceding bullet computes resistance labels for the left boundary. By Lemma \ref{lem:upperlefthalf}, and by symmetry considerations the same computed value holds on the other two grid boundary edges: $T_{r,r,R} = T_{n-1,r,B} = \frac{2}{3}, 1 \le r \le n-1.$
    \item All other edge values are 1, since
    the computation $Y(\Delta(1,1,1), \Delta(1,1,1), \Delta(1,1,1))=1$ applies to $T_{r,d,X}, X \in \{L,R,B\}$
    \item Again, cases not covered by the four transformation functions are covered by symmetry considerations and Lemma \ref{lem:upperlefthalf}. For example the formula for $T_{r,d,B}$ is only valid for $r \le n-2,$ and therefore both the symmetry considerations and the lemma are needed.  
\end{itemize}

We may summarize our results in a lemma, see also, \cite[Corollary 5.1]{Hendel},\cite[Lemma 6.1]{EvansHendel}.

\begin{lemma}\label{lem:1reduction}
The resistance labels of the $n-1$ grid arising from one reduction of an all--one $n$ grid are as follows:
\begin{enumerate}
    \item Boundary resistance labels are uniformly $\frac{2}{3}$.
    \item Interior resistance labels are uniformly 1.
\end{enumerate}
\end{lemma}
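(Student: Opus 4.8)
The plan is to apply the four transformation functions of Section~\ref{sec:proofmethods} directly, exploiting that every edge label of the parent all--one $n$--grid equals $1$. The single computation underlying everything is $\Delta(1,1,1)=\frac{1\cdot 1}{1+1+1}=\frac{1}{3}$, read off from~\eqref{equ:deltay}. Since each argument of each inner $\Delta$ is an edge label of the all--one grid and hence equal to $1$, every inner $\Delta$ evaluates to $\frac{1}{3}$, independently of the order in which its three arguments are listed.

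First I would settle the boundary edges. The boundary--left transformation function writes $T_{r,1,L}^c$ as a sum of two $\Delta$ terms; with every argument equal to $1$ this is $\frac{1}{3}+\frac{1}{3}=\frac{2}{3}$, valid for $1 \le r \le n-1$. This computes all left--boundary labels of the child grid in one stroke.

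Next I would treat the interior edges. The base, left (non--boundary), and right transformation functions each have the shape $Y(\Delta,\Delta,\Delta)$. Substituting $\Delta=\frac{1}{3}$ into $Y(x,y,z)=\frac{xy+yz+zx}{x}$ from~\eqref{equ:deltay} gives $Y\!\left(\tfrac{1}{3},\tfrac{1}{3},\tfrac{1}{3}\right)=\frac{3\cdot\frac{1}{9}}{\frac{1}{3}}=1$, so each interior edge of the child grid carries label $1$.

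The one point requiring care is that the transformation functions come with constraints (such as $r+2 \le n$ and $d+1 \le r$) and so do not literally apply to every edge near the bottom row and the three corners. Here I would invoke the vertical and $\frac{\pi}{3}$--rotational symmetry of the grid together with Lemma~\ref{lem:upperlefthalf}: it suffices to verify the upper left half, after which the boundary value $\frac{2}{3}$ propagates to the right and base boundaries and the interior value $1$ propagates to every remaining interior edge. The main obstacle is therefore purely bookkeeping---confirming that each edge of the upper left half is reached either by one of the three interior functions or by the boundary function, and that no edge is misclassified; the arithmetic itself is immediate.
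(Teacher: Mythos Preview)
Your proposal is correct and follows essentially the same approach as the paper: both arguments evaluate the boundary transformation as $\Delta(1,1,1)+\Delta(1,1,1)=\tfrac{2}{3}$ and the interior transformations as $Y(\Delta(1,1,1),\Delta(1,1,1),\Delta(1,1,1))=1$, and then invoke the grid symmetry together with Lemma~\ref{lem:upperlefthalf} to extend these values to the edges not directly covered by the transformation functions. If anything, your write-up is slightly more explicit about the intermediate value $\Delta(1,1,1)=\tfrac{1}{3}$ and about the role of the index constraints.
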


The top corner triangle, $T_{1,1}$ of the $n-1$
grid is presented in Panel A of Figure \ref{fig:motivationillustration}.

\subsection{Uniform Central Regions}
Prior to continuing with the computations we  introduce the concept of the uniform center which will be used in the proof of the main theorem.

First, we can identify a triangle with the ordered list, Left, Right, Base, of its resistance labels. Two triangles are then equivalent if their edge labels are equal. By Lemma \ref{lem:1reduction} for the once reduced $n-1$ grid we have
$$
    T_{r,1}=\left(\frac{2}{3},1,1\right) \text{  and  }
    T_{r,r} = \left(1,\frac{2}{3},1\right) \text{  for  }
    2 \le r \le n-1.
$$
Although $T_{r,1}$ and $T_{r,r}$ are not strictly equivalent we will say they are equivalent up to symmetry since each triangle may be derived from the other by a vertical symmetry, \cite[Definition 5.8]{EvansHendel}.

Using these concepts of triangle equivalence and triangle equality up to symmetry, we note that the central region, $2 \le r \le n-1$ of diagonal 1 of the reduced $n-1$ grid is uniform, that is all triangles are equal. We also note that the interior of the reduced $n-1$ grid is uniform.   

This presence of uniformity generalizes. The formal statement of the uniform center \cite[Theorem 6.2]{EvansHendel} is as follows:

 \begin{theorem}[Uniform Center]\label{the:uniformcenter} For any  $s \ge 1,$ let  $n \ge 4s,$ and $1 \le d \le s:$
 \begin{enumerate}
\item For
 \begin{equation}\label{equ:uniformcenter}
 s+ d \le r \le m-2s
 \end{equation}
 the triangles $T_{r,d}^s$ are all equal.
\item For 
\begin{equation}\label{equ:uniformcenter2}
2s-1 \le r \le m-2s
\end{equation}
the left sides $T_{r,s,L}^s$ are all equal,
$T_{2s-1,s,R}^s=T_{2s-1,s,L}^s,$
$T_{r,s,R}^s=1, 2s \le r \le m-2s,$
and $T_{r,s,B}^s=1, 2s-1 \le r \le m-2s-1.$
\item For any triangle in the uniform center, that is, satisfying \eqref{equ:uniformcenter},
$ T_{r,d,R}=T_{r,d,B}.$
\end{enumerate}
\end{theorem}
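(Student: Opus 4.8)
The plan is to argue by induction on the number of reductions $s$, establishing the three parts of Theorem~\ref{the:uniformcenter} \emph{simultaneously}, together with an auxiliary ``pristine exterior'' claim. Writing $m = n-s$ for the size of the $s$-times reduced grid, the base case $s=1$ follows directly from Lemma~\ref{lem:1reduction}: after one reduction every boundary edge is $\frac23$ and every interior edge is $1$, so each diagonal is constant in $r$, the frontier diagonal $d=1$ has the form $(\frac23,1,1)$, and $R=B=1$ holds throughout. For the auxiliary claim I will record that, in the $s$-reduced grid, every triangle on a diagonal $d>s$ lying in the upper-left half still carries all-$1$ labels; this is immediate from the transformation functions since, as noted in Section~\ref{sec:Appendix_A}, $Y(\Delta(1,1,1),\Delta(1,1,1),\Delta(1,1,1))=1$, so an all-$1$ input neighborhood forces an all-$1$ output and the frontier advances by exactly one diagonal per reduction.

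The engine of the induction is the \emph{locality} of the four transformation functions of Section~\ref{sec:proofmethods}: the child edge label at position $(r,d)$ is a fixed function of the parent triangles at rows $r,r+1,r+2$ and diagonals $d-1,d,d+1$ only. Two consequences drive parts 1 and 3. First (part 1), shifting $r\mapsto r+1$ shifts the entire input neighborhood by one row; if, by the inductive hypothesis, each relevant parent diagonal is constant in $r$ over the appropriate range, then the child triangle $T^{s+1}_{r,d}$ is also constant in $r$. The only work here is index bookkeeping: the inputs at rows up to $r+2$ and diagonals up to $d+1$ must stay inside the parent region $\{s+d'\le r'\le m-2s\}$, which forces the child range to contract to $(s+1)+d\le r\le (m-1)-2(s+1)$, and the hypothesis $n\ge 4s$ is precisely what keeps these ranges nonempty. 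Second (part 3), I will observe that the three $\Delta$-arguments feeding the right-edge function and the three feeding the base-edge function form the \emph{same} multiset once all parent triangles are equal, so the symmetric numerator of $Y(x,y,z)=(xy+yz+zx)/x$ agrees for both; the two outputs then coincide exactly when their distinguished arguments $x$ agree, and a short calculation with \eqref{equ:deltay} reduces this to the parent identity $R=B$. Thus part 3 reproduces itself under reduction and feeds back into part 1.

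The delicate part, and the main obstacle, is part 2: locating and describing the frontier diagonal. For the child the frontier sits at $d=s+1$, and computing its edges via \eqref{equ:leftside9proofs} draws on parent triangles from the old frontier $d=s$ (controlled by the inductive form of part 2, namely a common left value, right and base equal to $1$) together with pristine triangles at diagonals $d\ge s+1$ (all-$1$, by the auxiliary claim). Substituting these specific inputs into the transformation functions is what produces the claimed frontier structure for the child: a single new left value constant in $r$, right and base sides equal to $1$, and the boundary exception $T^{s+1}_{2(s+1)-1,\,s+1,R}=T^{s+1}_{2(s+1)-1,\,s+1,L}$ at the top of the frontier, where the top-corner disturbance first meets the diagonal. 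I expect the fiddly points to be exactly these edge-of-range cases: matching the two different upper limits ($m-2s$ for the right sides versus $m-2s-1$ for the base sides) and verifying the special top equality, since there the relevant parent neighborhood straddles the boundary between the uniform center and the corner region and no longer consists of identical triangles. Once these frontier identities are checked, parts 1, 2, and 3 for the child follow, completing the induction.
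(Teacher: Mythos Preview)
The paper does not actually prove Theorem~\ref{the:uniformcenter}: it is quoted from \cite[Theorem~6.2]{EvansHendel} and used throughout as a black box, so there is no in-paper argument against which to compare your proposal.

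On its own merits, your inductive scheme on $s$, with Lemma~\ref{lem:1reduction} as base case, the locality of the four transformation functions of Section~\ref{sec:proofmethods} as the engine, and the auxiliary ``pristine exterior'' claim (diagonals $d>s$ remain all-$1$) carried alongside, is the natural line of attack and is structurally sound for parts~1 and~2. One point needs more care. Your argument for part~3 asserts that the $\Delta$-inputs to the right-edge and base-edge formulas ``form the same multiset once all parent triangles are equal.'' But inspecting the explicit triangle dependencies in Section~\ref{sec:proofmethods}, the child right edge at $(r,d)$ draws on parent triangles $T_{r,d},\,T_{r+1,d},\,T_{r,d-1}$, whereas the child base edge draws on $T_{r+2,d+1},\,T_{r+1,d},\,T_{r+1,d+1}$: different diagonals appear, and part~1 of the inductive hypothesis only gives constancy in $r$ along a \emph{fixed} diagonal, not equality across diagonals. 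So the multiset claim as written does not follow, and the reduction to ``the distinguished $Y$-argument $x$ agrees'' is not yet justified. You will need either a direct computation showing the two $Y$-values coincide using the specific inductive structure of the labels (including the parent $R=B$ identity and the frontier data from part~2), or, more cleanly, an appeal to the $\tfrac{\pi}{3}$ rotational symmetry of the reduced grids noted before Example~\ref{exa:upperlefthalf}, which permutes the $L,R,B$ roles and can be arranged to identify $R$ and $B$ in the uniform center.
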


This theorem has an elegant graphical interpretation. It states that the sub triangular grid whose corner triangles are $T_{2s-1s}^s, T_{m-2s,m}^s, T_{m-2s, m-2s}^s$ has interior labels of 1 and a single uniform label along its edge boundary. However this interpretation is not needed in the sequel.

\subsection{Two Reductions of an all--one $n$--grid} 

We continue illustrating computations by considering the $n-2$ grid arising from 2 reductions of the all--one $n$ grid (or one reduction of the $n-1$ grid.)  

By \eqref{equ:leftside3proofs}
$$
    T_{3,2,L}^2 = F(T_{3,1}^1, T_{3,2}^1, T_{4,2}^1).
$$
By Lemma \ref{lem:1reduction}, we have
$$
    T_{3,1}^1=(\frac{2}{3},1,1),
    T^1_{3,2}=T^p_{4,2} =(1,1,1).
$$
Hence, by \eqref{equ:leftside9proofs}
\begin{equation}\label{equ:t322627}
T_{3,2,L}^c=Y(\Delta(1,1,\frac{2}{3}),
            \Delta(1,1,1), \Delta(1,1,1))=
            \frac{26}{27},
\end{equation}
as shown in Panel B of Figure \ref{fig:motivationillustration}.

To continue with the computations  
we define the function,
\begin{equation}\label{equ:g0}
    G_0(X)=Y(\Delta(1,1,X),
            \Delta(1,1,1), \Delta(1,1,1))=
            \frac{X+8}{9},
\end{equation}
and confirm $G_0(\frac{2}{3}) = \frac{26}{27}.$

\subsection{Three Reductions of an all--one $n$-grid.}
We next compute $T_{5,3,L}^3.$ Continuing as in the case of $T_{3,2,1},$ we have by
\eqref{equ:leftside3proofs}
\begin{equation}\label{equ:temp1}
T_{5,3,L}^3=F(T_{5,2}^2, T_{5,3}^2, T_{6,3}^2).
\end{equation}
By Theorem \ref{the:uniformcenter}(b)
$$
    T_{5,2,L}^2 = T_{3,2,L}^2,
$$
and by \eqref{equ:t322627}
$$
    T_{3,2,L}^2 = \frac{26}{27},
$$
implying 
$$
    T_{5,2,L}^2 = \frac{26}{27}.
$$
Again, by Theorem \ref{the:uniformcenter}
all resistance labels of $T_{5,3}^2, T_{6,3}^2$ are 1. Plugging this into \eqref{equ:temp1} and using \eqref{equ:leftside9proofs} and \eqref{equ:g0},
we have
$$
T_{5,3,L}^3 = Y\left(\Delta\left(\frac{26}{27},1,1\right), \Delta(1,1,1), \Delta(1,1,1)\right)=G_0\left(\frac{26}{27}\right)=\frac{242}{243}
$$
Panel C of Figure \ref{fig:motivationillustration} illustrates this.

We can continue this process inductively. For example, $T^4_{7,4} = G_0(\frac{242}{243}).$ The result is summarized as follows.

\begin{lemma} \label{lem:row0}
With $G_0(X)$ defined by \eqref{equ:g0} we have
$T_{1,1,L}^1=\frac{2}{3}$ and for $s \ge 2,$ 
$T_{2s-1,s,L}=G_{0}(T_{2s-3,s-1,L}).$ 
\end{lemma}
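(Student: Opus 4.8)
The plan is to prove the two assertions separately and directly; no induction on $s$ is needed for the recursion itself, since the inductive content has already been packaged inside Theorem~\ref{the:uniformcenter}. For the base case, $T_{1,1,L}^1=\frac{2}{3}$ is immediate from Lemma~\ref{lem:1reduction}(1): in the once-reduced grid $T_{1,1}$ is the top corner, its left side is a boundary edge, and all boundary edges carry the uniform value $\frac{2}{3}$.

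For the recursion, fix $s\ge 2$ and treat the grid after $s-1$ reductions as the parent and the $s$-th reduction as the child. Applying the non-boundary left-edge transformation function \eqref{equ:leftside3proofs}--\eqref{equ:leftside9proofs} at $(r,d)=(2s-1,s)$ gives
\begin{equation*}
T_{2s-1,s,L}^{s}=F\bigl(T_{2s-1,s-1}^{s-1},\,T_{2s-1,s}^{s-1},\,T_{2s,s}^{s-1}\bigr),
\end{equation*}
so the task reduces to reading off the nine parent edge-labels of the three triangles $T_{2s-1,s-1}^{s-1}$, $T_{2s-1,s}^{s-1}$, $T_{2s,s}^{s-1}$ and substituting into \eqref{equ:leftside9proofs}.

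These nine labels are supplied by Theorem~\ref{the:uniformcenter} applied with parameter $s-1$. First, $T_{2s-1,s}^{s-1}$ and $T_{2s,s}^{s-1}$ have diagonal $d=s$ strictly exceeding $s-1$, so (for $n$ large relative to $s$) they lie strictly inside the level-$(s-1)$ uniform center; by parts (a) and (c), together with the interior-labels-are-one description following the theorem, each of these two triangles is the all-one triangle with labels $(1,1,1)$. Second, for the diagonal-$(s-1)$ triangle $T_{2s-1,s-1}^{s-1}$, part (b) with parameter $s-1$ gives $T_{2s-1,s-1,R}^{s-1}=1$ and $T_{2s-1,s-1,B}^{s-1}=1$, while its equal-left-sides clause gives $T_{2s-1,s-1,L}^{s-1}=T_{2s-3,s-1,L}^{s-1}$. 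Writing $X:=T_{2s-3,s-1,L}^{s-1}$ and feeding these values into \eqref{equ:leftside9proofs}, the first $\Delta$-slot becomes $\Delta(1,1,X)$ and the remaining two become $\Delta(1,1,1)$, so that
\begin{equation*}
T_{2s-1,s,L}^{s}=Y\bigl(\Delta(1,1,X),\Delta(1,1,1),\Delta(1,1,1)\bigr)=G_0(X)=G_0\bigl(T_{2s-3,s-1,L}^{s-1}\bigr),
\end{equation*}
which is exactly the claimed recursion; the worked computations preceding the lemma are the instances $\frac{26}{27}=G_0(\frac{2}{3})$ and $\frac{242}{243}=G_0(\frac{26}{27})$.

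The routine part is the closed-form evaluation $Y(\Delta(1,1,X),\Delta(1,1,1),\Delta(1,1,1))=\frac{X+8}{9}$, which is already recorded as \eqref{equ:g0}. The step requiring the most care is verifying admissibility, that is, that the three parent triangles actually fall in the ranges where Theorem~\ref{the:uniformcenter} applies: the two diagonal-$s$ triangles must be interior to (not on the boundary diagonal of) the level-$(s-1)$ uniform center, and $T_{2s-1,s-1}$ must simultaneously meet the row bounds of all three clauses of part (b), the tightest being the base-edge bound $2s-3\le 2s-1\le m-2s+1$ with $m=n-s+1$. Translating these into a hypothesis on $n$ (roughly $n\gtrsim 5s$) and confirming it holds for the grids under consideration is the only genuine obstacle; once admissibility is granted, the identification with $G_0$ is forced. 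A secondary bookkeeping point is matching the cyclic order of the arguments inside each $\Delta$ in \eqref{equ:leftside9proofs} to the correct $(L,R,B)$ labels, but this is dictated verbatim by the displayed form of the left-edge function.
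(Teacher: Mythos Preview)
Your proof is correct and follows essentially the same approach as the paper: apply the non-boundary left-edge transformation \eqref{equ:leftside3proofs}--\eqref{equ:leftside9proofs} at $(r,d)=(2s-1,s)$, invoke Theorem~\ref{the:uniformcenter} to identify the nine parent labels (two all-one interior triangles and one triangle on diagonal $s-1$ with $R=B=1$ and $L=T_{2s-3,s-1,L}^{s-1}$), and recognize the resulting expression as $G_0$. The only cosmetic difference is that the paper works the cases $s=2,3$ explicitly and then says ``continue this process inductively,'' whereas you carry out the general-$s$ computation directly and note (correctly) that the Uniform Center Theorem already absorbs the inductive content.
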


An almost identical argument using the circuit transformations for the right edge   shows the following.

 \begin{lemma} \label{lem:row1}
 Let $G_1(X)=\frac{1}{3} \frac{X+8}{X+2}.$ Then for $k \ge 0,$
$T_{3+2k,2+k,R}^{2+k}=G_1(T^k_{1+2k,1+k})$
\end{lemma}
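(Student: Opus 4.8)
The plan is to run the proof of Lemma~\ref{lem:row0} essentially verbatim, swapping the non-boundary left-edge transformation \eqref{equ:leftside9proofs} for its right-edge counterpart, and then watching how that single change converts $G_0$ into $G_1$. First I would apply the right-edge transformation function to $T_{3+2k,2+k,R}^{2+k}$, writing it in the mnemonic form $F$ as a function of the three parent triangles of the $(1+k)$-reduced grid, namely $T_{3+2k,2+k}^{1+k}$, $T_{4+2k,2+k}^{1+k}$, and $T_{3+2k,1+k}^{1+k}$. Putting $s=2+k$, these are exactly $T_{2s-1,s}^{s-1}$, $T_{2s,s}^{s-1}$, and $T_{2s-1,s-1}^{s-1}$, so the entire computation lives in the $(s-1)$-reduced grid, just as in the left-edge case.

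The second step is to read off the labels of these three parents from Theorem~\ref{the:uniformcenter}, applied with parameter $s-1$ (legitimate once $n\ge 4s$). The first two triangles sit on diagonal $s=(s-1)+1$, one step interior of the active diagonal, so they lie in the uniform center and carry the label $(1,1,1)$ — the same input used in \eqref{equ:g0}. The third triangle $T_{2s-1,s-1}^{s-1}$ sits on the active diagonal $d=s-1$: by part (2) of Theorem~\ref{the:uniformcenter} its right and base edges equal $1$, while its left edge equals the common value $T_{2s-3,s-1,L}^{s-1}$, which is precisely the left-edge quantity of $T_{1+2k,1+k}$ produced by Lemma~\ref{lem:row0}. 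Writing $X$ for this left-edge value, the three $\Delta$-arguments fed into $Y$ are $\Delta(1,1,1)=\tfrac13$, $\Delta(1,1,1)=\tfrac13$, and $\Delta(1,1,X)=\tfrac{1}{2+X}$ — the very same three numbers that appear in the computation behind $G_0$.

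The crux, and the only genuine point of departure from Lemma~\ref{lem:row0}, is that in the right-edge transformation the distinguished first argument of $Y$ (the one that sits in the denominator of $Y(x,y,z)=\tfrac{xy+yz+zx}{x}$) is the $\Delta$ coming from $T_{r,d}$ rather than the one coming from $T_{r,d-1}$. Thus $X$ now occupies a non-denominator slot, and a one-line simplification gives
\[ T_{3+2k,2+k,R}^{2+k}=Y(\tfrac13,\tfrac13,\tfrac{1}{2+X})=\tfrac13+\tfrac{2}{2+X}=\tfrac13\cdot\tfrac{X+8}{X+2}=G_1(X), \]
whereas the left-edge placement produced $\tfrac{X+8}{9}=G_0(X)$. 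The base case $k=0$ is then just $G_1(\tfrac23)=\tfrac{13}{12}$, with $X=T_{1,1,L}^{1}=\tfrac23$ furnished by Lemma~\ref{lem:row0}, paralleling the companion left-edge value computed in \eqref{equ:t322627}; every $k\ge 0$ is handled identically once the uniform-center hypotheses hold.

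The hard part will be the index bookkeeping rather than any calculation. I would need to confirm that, among the three parent triangles, it is exactly $T_{2s-1,s-1}^{s-1}$ that lands on the active diagonal $d=s-1$ where part (2) of Theorem~\ref{the:uniformcenter} supplies the nontrivial left-edge value, while the other two are interior and uniformly $(1,1,1)$; and I would have to check that the range constraints of the right-edge transformation ($r+1\le n$ and $2\le d\le r-1$) are compatible with the hypothesis $n\ge 4s$ of the uniform center theorem, which forces $n$ to grow linearly with $k$. Once those compatibility checks are in place, the passage from $G_0$ to $G_1$ is automatic, since it reflects nothing more than which of the three identical $\Delta$-values is promoted to the denominator of $Y$.
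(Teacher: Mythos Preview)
Your proposal is correct and follows exactly the approach the paper intends: the paper itself gives no detailed proof but simply states that ``an almost identical argument using the circuit transformations for the right edge'' yields the result, and you have faithfully carried this out---using the right-edge version of \eqref{equ:leftside9proofs} on the same three parent triangles, invoking Theorem~\ref{the:uniformcenter} to identify their labels, and observing that the sole difference from the $G_0$ computation is which $\Delta$-value occupies the denominator slot of $Y$. Your bookkeeping and the final simplification $Y(\tfrac13,\tfrac13,\tfrac{1}{2+X})=\tfrac{X+8}{3(X+2)}$ are both correct.
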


\section{Motivation for the Circuit Array}\label{sec:motivation}

This section motivates the underlying construction of the Circuit Array. We initialize with an all--one $n$-grid for $n$ large enough. As computed in Section \ref{sec:Appendix_A}, we have:
\begin{itemize}
    \item 
    $T_{1,1,L}^1=\frac{2}{3}=1-\frac{3}{9^1}.$ 
    See Panel A of Figure \ref{fig:motivationillustration}.
    \item $T_{3,2,L}^2=\frac{26}{27}=1-\frac{3}{9^2}.$ See Panel B of Figure \ref{fig:motivationillustration}.
    \item $T_{5,3,L}^1=\frac{242}{243}=1-\frac{3}{9^3}.$ See Panel C of Figure \ref{fig:motivationillustration}.
\end{itemize}

The resulting sequence
$$
	\frac{2}{3}, \frac{26}{27}, \frac{242}{243}, \dotsc
$$
satisfies 
\begin{equation}\label{equ:row0}
	1 -\frac{3}{9^s}, s=1,2,3, \dotsc.
\end{equation}
In this particular case the denominators, $9^s$ form a linear homogeneous recursion with constant coefficients (LHRCC) of order 1,
$$
        G_s = 9 G_{s-1}, s \ge 1, \qquad G_0=1.
$$
Similarly, the numerators satisfy the LRCC,
$$
    G_s =3G_{s-1}+8, s \ge 1, \qquad G_0=-2. 
$$
(and therefore, since a sequence satisfying a linear, non--homogeneous recursion with constant coefficients (LRCC) will also satisfy an 
LHRCC albeit with a higher degree), the sequence also satisfies an LHRCC.

The sequence just studied forms row 0 of the Circuit Array, Table \ref{tab:circuitarray}.  To determine row 1 of the Circuit Array we compute the following:
\begin{itemize}
    \item The right side of the triangle left-adjacent to the top corner triangle of the 2-rim of reduction 2 has label
    $\frac{13}{12} = 1+\frac{2}{3} \frac{1}{9^{2-1}-1},$ as shown in Panel B of Figure \ref{fig:motivationillustration}.
    \item The right side of the triangle left-adjacent to the top corner triangle of the 3-rim of reduction 3 has label
    $\frac{121}{120} = 1+\frac{2}{3} \frac{1}{9^{3-1}-1},$ as shown in Panel C of Figure \ref{fig:motivationillustration}.
    \item The right side of the triangle left-adjacent to the top corner triangle of the 4-rim of reduction 4 has label
    $\frac{1093}{1092} = 1+\frac{2}{3} \frac{1}{9^{4-1}-1}.$
\end{itemize}

  The resulting sequence
$$
	\frac{13}{12}, \frac{121}{120},   \frac{1093}{1092},	\frac{9841}{9840}, \dotsc
$$
satisfies $1+\frac{2}{3 } { 9^{s-1}-1     }.$ We again see the presence of LRCC. The sequence of twice the denominators satisfies the LRCC
$$
    G_{s+1} = 9G_s +24, s \ge 3, G_2 =24,
$$
while the sequence of twice the numerators satisfies the LRCC,
$$
    G_{s+1} = 9G_s +8, s \ge 3, G_2 =26,
$$
and hence both numerators and denominators satisfy LHRCC, albeit of higher order.

These calculations determine the construction of the Circuit Array by rows. Figure~\ref{fig:motivationillustration} can be used to motivate a construction by columns. Each perspective provides different sequences.
\begin{itemize}
    \item As shown in Panel A, Column 1, consists of the singleton $\frac{2}{3}.$ We may describe the process of generating this column by starting, at the left resistance edge label of triangle $T_{1,1},$ where 1 corresponds to the number of underlying reductions of the all--one $n$-grid,  traversing to the left (in this case there is nothing further to transverse) and ending at the left--most edge of the underlying row. This singleton $\frac{2}{3}$ is column 0 of the Circuit Array, Table \ref{tab:circuitarray}.
     \item As shown in Panel B, Column 2 may be obtained as follows:  Start, at the left resistance-label of   triangle $T_{3,2},$ where the number of reductions of of the all--one $n$-grid for this column is $2,  \text{ and }  3 = 2 \times 2 -1$. This resistance is  $\frac{26}{27}.$ Then traverse to the left,   and end at the left--most edge of the underlying row. By recording the labels during this transversal we obtain $\frac{26}{27},\frac{13}{12}, \frac{1}{2},$ which is column 1 of the Circuit Array, Table \ref{tab:circuitarray}, starting at row 0 and ending at row 2.
     \item As shown in Panel C, Column 3 may be obtained as follows:  Start,  at the left resistance-label of   triangle $T_{5,3},$ where the number of reductions of the all--one $n$-grid for this column is $3, \text{ and } 5 = 2 \times 3 -1.$ This resistance  is
      $\frac{242}{243},$ traverse to the left,   and end at the left most edge of the underlying row. By recording the labels during this transversal we obtain  $\frac{242}{243},\frac{121}{120}, \frac{89}{100},\frac{1157}{960},\frac{13}{32},$ which is column 2 of the Circuit Array, Table \ref{tab:circuitarray}), starting at row 0 and ending at row 4.
     \item The above suggests in general, that column $c \ge 1$ of the Circuit Array will consist of the resistance labels of the left and right sides of the triangles $T_{2c-1,i}^c, i=c, c-1, \dotsc 1.$ This will be formalized in the next section.
\end{itemize}

\begin{figure}[ht!]
\begin{center}
\begin{tabular}{||c|c|c||}
\hline 

\begin{tikzpicture}[scale = 1, line cap=round,line join=round,>=triangle 45,x=1.0cm,y=1.0cm,scale = 1]
\draw [line width=.8pt,color=black] (-2.,10.)-- (-3.,11.732050807568879);
\draw [line width=.8pt,color=black] (-2.,10.)-- (-4.,10);
\draw [line width=.8pt,color=black] (-3.,11.732050807568879)-- (-4.,10.);

\begin{scriptsize}
\draw[color=black] (-3.8,11) node {$\dfrac{2}{3}$};
\draw[color=black] (-2.2,11) node {$\dfrac{2}{3}$};
)
 \node [above] at (-3,9.5) {Panel A}; 
 
\end{scriptsize}
\end{tikzpicture} &
\begin{tikzpicture}[scale = 1, line cap=round,line join=round,>=triangle 45,x=1.0cm,y=1.0cm,scale = 1]
\draw [line width=.8pt,color=black] (-2.,10.)-- (-3.,11.732050807568879);
\draw [line width=.8pt,color=black] (-2.,10.)-- (-4.,10);
\draw [line width=.8pt,color=black] (-3.,11.732050807568879)-- (-4.,10.);
\draw [line width=.8pt,color=black] (-4.,10.)-- (-3.,8.267949192431121);
\draw [line width=.8pt,color=black] (-3.,8.267949192431121)-- (-2.,10.);
\draw [line width=.8pt,color=black] (-4.,10.)-- (-5.,8.267949192431123);
\draw [line width=.8pt,color=black] (-5.,8.267949192431123)-- (-3.,8.267949192431121);
\draw [line width=.8pt,color=black] (-3.,8.267949192431121)-- (-1.,8.267949192431121);
\draw [line width=.8pt,color=black] (-1.,8.267949192431121)-- (-2.,10.);
\draw [line width=.8pt,color=black] (-3.,8.267949192431121)-- (-2.,6.535898384862243);
\draw [line width=.8pt,color=black] (-3.,8.267949192431121)-- (-4.,6.535898384862244);
\draw [line width=.8pt,color=black] (-1.,8.267949192431121)-- (-2.,6.535898384862244);
\draw [line width=.8pt,color=black] (-4.,6.535898384862244)-- (-2.,6.535898384862243);
\draw [line width=.8pt,color=black] (-3.,8.267949192431121)-- (-5.,8.267949192431121);
\draw [line width=.8pt,color=black] (-5.,8.267949192431121)-- (-4.,6.535898384862244);
\draw [line width=.8pt,color=black] (-2.,6.535898384862243)-- (0.,6.535898384862242);
\draw [line width=.8pt,color=black] (-4.,6.535898384862243)-- (-6.,6.535898384862242);
\draw [line width=.8pt,color=black] (0.,6.535898384862242)-- (-1.,8.267949192431121);
\draw [line width=.8pt,color=black] (-5.,8.267949192431123)-- (-6.,6.535898384862246);

\begin{scriptsize}
\draw[color=black] (-5.8,7.5) node {$\dfrac{1}{2}$};
\draw[color=black] (-4.9,7.5) node {$\dfrac{13}{12}$};
\draw[color=black] (-3.8,7.5) node {$\dfrac{26}{27}$};
\draw[color=black] (-2.2,7.5) node {$\dfrac{26}{27}$};

 \node [above] at (-3,6) {Panel B}; 

\end{scriptsize}
\end{tikzpicture} &
\scalebox{.65}{\begin{tikzpicture}[scale = 1, line cap=round,line join=round,>=triangle 45,x=1.0cm,y=1.0cm,scale = 1]
\draw [line width=.8pt,color=black] (-2.,10.)-- (-3.,11.732050807568879);
\draw [line width=.8pt,color=black] (-2.,10.)-- (-4.,10);
\draw [line width=.8pt,color=black] (-3.,11.732050807568879)-- (-4.,10.);
\draw [line width=.8pt,color=black] (-4.,10.)-- (-3.,8.267949192431121);
\draw [line width=.8pt,color=black] (-3.,8.267949192431121)-- (-2.,10.);
\draw [line width=.8pt,color=black] (-4.,10.)-- (-5.,8.267949192431123);
\draw [line width=.8pt,color=black] (-5.,8.267949192431123)-- (-3.,8.267949192431121);
\draw [line width=.8pt,color=black] (-3.,8.267949192431121)-- (-1.,8.267949192431121);
\draw [line width=.8pt,color=black] (-1.,8.267949192431121)-- (-2.,10.);
\draw [line width=.8pt,color=black] (-3.,8.267949192431121)-- (-2.,6.535898384862243);
\draw [line width=.8pt,color=black] (-3.,8.267949192431121)-- (-4.,6.535898384862244);
\draw [line width=.8pt,color=black] (-1.,8.267949192431121)-- (-2.,6.535898384862244);
\draw [line width=.8pt,color=black] (-4.,6.535898384862244)-- (-2.,6.535898384862243);
\draw [line width=.8pt,color=black] (-3.,8.267949192431121)-- (-5.,8.267949192431121);
\draw [line width=.8pt,color=black] (-5.,8.267949192431121)-- (-4.,6.535898384862244);
\draw [line width=.8pt,color=black] (-2.,6.535898384862243)-- (0.,6.535898384862242);
\draw [line width=.8pt,color=black] (0.,6.535898384862242)-- (-1.,8.267949192431121);
\draw [line width=.8pt,color=black] (-5.,8.267949192431123)-- (-6.,6.535898384862246);
\draw [line width=.8pt,color=black] (-6.,6.535898384862246)-- (-4.,6.535898384862244);
\draw [line width=.8pt,color=black] (-2.,6.535898384862243)-- (-1.,4.803847577293364);
\draw [line width=.8pt,color=black] (-1.,4.803847577293364)-- (0.,6.535898384862242);
\draw [line width=.8pt,color=black] (-4.,6.535898384862244)-- (-3.,4.8038475772933635);
\draw [line width=.8pt,color=black] (-3.,4.8038475772933635)-- (-2.,6.535898384862243);
\draw [line width=.8pt,color=black] (-6.,6.535898384862246)-- (-5.,4.803847577293364);
\draw [line width=.8pt,color=black] (-5.,4.803847577293364)-- (-4.,6.535898384862244);
\draw [line width=.8pt,color=black] (-6.,6.535898384862246)-- (-7.,4.80384757729337);
\draw [line width=.8pt,color=black] (1.,4.803847577293362)-- (0.,6.535898384862242);
\draw [line width=.8pt,color=black] (-7.,4.80384757729337)-- (-5.,4.803847577293364);
\draw [line width=.8pt,color=black] (-3.,4.80384757729337)-- (-5.,4.803847577293364);
\draw [line width=.8pt,color=black] (-3.,4.80384757729337)-- (-1.,4.803847577293364);
\draw [line width=.8pt,color=black] (-1.,4.803847577293364)-- (1.,4.803847577293362);
\draw [line width=.8pt,color=black] (-1.,4.803847577293364)-- (0.,3.071796769724484);
\draw [line width=.8pt,color=black] (0.,3.071796769724484)-- (1.,4.803847577293362);
\draw [line width=.8pt,color=black] (-3.,4.8038475772933635)-- (-2.,3.071796769724486);
\draw [line width=.8pt,color=black] (-2.,3.071796769724486)-- (-1.,4.803847577293364);
\draw [line width=.8pt,color=black] (-5.,4.803847577293364)-- (-4.,3.0717967697244837);
\draw [line width=.8pt,color=black] (-4.,3.0717967697244837)-- (-3.,4.8038475772933635);
\draw [line width=.8pt,color=black] (-5.,4.803847577293364)-- (-7.,4.80384757729337);
\draw [line width=.8pt,color=black] (-7.,4.80384757729337)-- (-6.,3.071796769724488);
\draw [line width=.8pt,color=black] (-6.,3.071796769724488)-- (-5.,4.803847577293364);
\draw [line width=.8pt,color=black] (-7.,4.80384757729337)-- (-8.,3.071796769724494);
\draw [line width=.8pt,color=black] (-8.,3.071796769724494)-- (-6.,3.071796769724488);
\draw [line width=.8pt,color=black] (0.,3.071796769724484)-- (2.,3.0717967697244815);
\draw [line width=.8pt,color=black] (2.,3.0717967697244815)-- (1.,4.803847577293362);
\draw [line width=.8pt,color=black] (-1.,4.803847577293364)-- (-2.,3.0717967697244877);
\draw [line width=.8pt,color=black] (-2.,3.0717967697244877)-- (0.,3.071796769724484);
\draw [line width=.8pt,color=black] (-3.,4.8038475772933635)-- (-4.,3.0717967697244863);
\draw [line width=.8pt,color=black] (-4.,3.0717967697244863)-- (-2.,3.071796769724486);
\draw [line width=.8pt,color=black] (-5.,4.803847577293364)-- (-6.,3.0717967697244846);
\draw [line width=.8pt,color=black] (-6.,3.0717967697244846)-- (-4.,3.0717967697244837);

\begin{scriptsize}
\draw[color=black] (-7.8,4.2) node {$\dfrac{13}{32}$};
\draw[color=black] (-6.8,3.6) node {$\dfrac{1157}{960}$};
\draw[color=black] (-5.8,4.2) node {$\dfrac{89}{100}$};
\draw[color=black] (-4.7,3.6) node {$\dfrac{121}{120}$};
\draw[color=black] (-3.8,4.2) node {$\dfrac{242}{243}$};
\draw[color=black] (-2.2,4.2) node {$\dfrac{242}{243}$};

\end{scriptsize}
 \node [above] at (-3,2.5) {Panel C}; 
\end{tikzpicture}
}
\\ \hline
\end{tabular}

\end{center}
\caption{Graphical illustration showing locations of various edge resistance values computated in this section. See the narrative for further details. }\label{fig:motivationillustration} 
\end{figure}
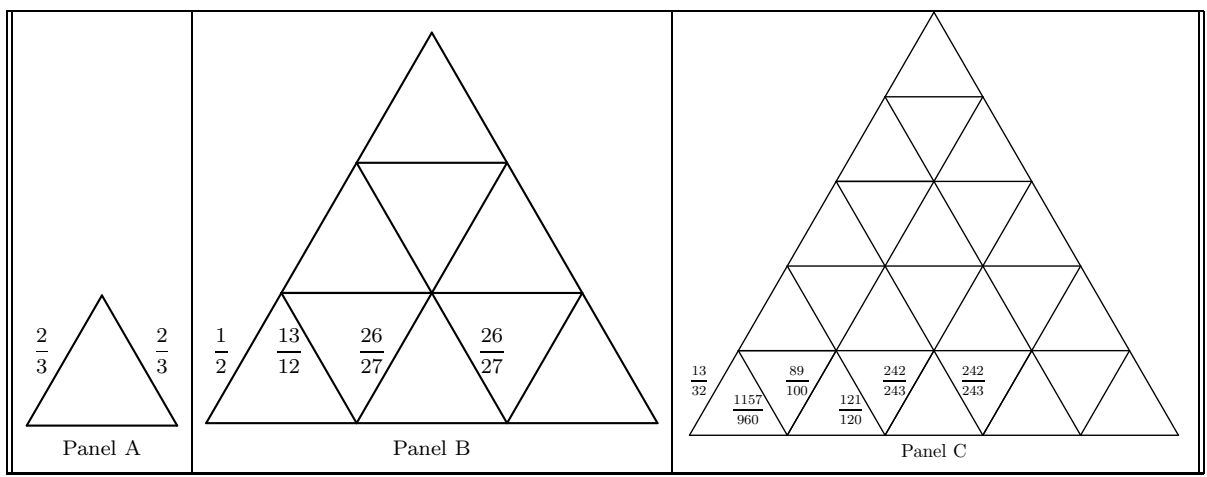

\section{The Circuit Array}

This section formally defines the Circuit array, whose $i$-th row, $0 \le i \le 2(j-2),$ and $j$-th column, $j \ge 1,$ contains $T_{2j-1,j-\lfloor \frac{i+1}{2}\rfloor,LR}^j$
where, as indicated at the end of Section \ref{sec:reduction} the symbol $LR$ means L (respectively R) if $i$ is even (respectively odd).

\begin{example}
This example repeats the derivation of the first three columns  derived at the end of \ref{sec:motivation}. 

Referring to   Figure \ref{fig:motivationillustration}, 
we see Panel A contains row 0, column 1 of the array containing $T_{2j-1,j-i}^j=T_{1,1}^1 = \frac{2}{3}.$ 

Panel B contains column 2 rows 0,1,2, which respectively contain $T_{3,2,L}^2=\frac{26}{27},
T_{3,2,R}^2=\frac{13}{12},
T_{3,1,L}^2=\frac{1}{2},
$

Panel C contains column 3 rows 0,1,2,3,4 which respectively contain
$T_{5,3,L}^3=\frac{242}{243},
T_{5,2,R}^3=\frac{121}{120},
T_{5,2,L}^3=\frac{89}{100},
T_{5,1,R}^3=\frac{1157}{969},
T_{5,1,L}^3=\frac{13}{32}.
$

\end{example}

Table \ref{tab:circuitarrayformal} presents the the first few rows and columns of formal entries of the Circuit Array while 
Table \ref{tab:circuitarray} presents the first few rows and columns of the numerical values of the Circuit Array.

\begin{center}
\begin{table}
 \begin{small}
\caption
{First few rows and columns of the formal entries of  the Circuit Array. }
\label{tab:circuitarrayformal}
{
\renewcommand{\arraystretch}{1.3}
\begin{center}
\begin{tabular}{||c||r|r|r|r|r|r|r|r||} 
\hline \hline

\;&$1$&$2$&$3$&$4$&$5$&$6$&$7$&$\dotsc$\\
\hline
$0$&$T_{1,1,L}^1$&$T_{3,2,L}^2$&$T_{5,3,L}^3$&$T_{7,4,L}^4$&$T_{9,5,L}^5$&$T_{11,6,L}^6$&$T_{13,7,L}^7$&$\dotsc$\\
$1$&\;&$T_{3,2,R}^2$&$T_{5,2,R}^3$&$T_{7,3,R}^4$&$T_{9,4,R}^5$&$T_{11,5,R}^6$&$T_{13,6,R}^7$&$\dotsc$\\
$2$&\;&$T_{3',1,L}^2$&$T_{5,2,L}^3$&$T_{7,3,L}^4$&$T_{9,4,L}^5$&$T_{11,5,L}^6$&$T_{13,6,L}^7$&$\dotsc$\\
$3$&\;&\;&$T_{5,1,R}^3$&$T_{7,2,R}^4$&$T_{9,3,R}^5$&$T_{11,4,R}^6$&$T_{13,5,R}^7$&$\dotsc$\\
$4$&\;&\;&$T_{5,1,L}^3$&$T_{7,2,L}^4$&$T_{9,3,L}^5$&$T_{11,4,L}^6$&$T_{13,5,L}^7$&$\dotsc$\\
$5$&\;&\;&\;&$T_{7,1,R}^4$&$T_{9,2,R}^5$&$T_{11,3,R}^6$&$T_{13,4,R}^7$&$\dotsc$\\
$6$&\;&\;&\;&$T_{7,1,L}^4$&$T_{9,2,L}^5$&$T_{11,3,L}^6$&$T_{13,4,L}^7$&$\dotsc$\\
$7$&\;&\;&\;&\;&$T_{9,1,R}^5$&$T_{11,2,R}^6$&$T_{13,3',R}^7$&$\dotsc$\\
$8$&\;&\;&\;&\;&$T_{9,1,L}^5$&$T_{11,2,L}^6$&$T_{13,3,L}^7$&$\dotsc$\\
$9$&\;&\;&\;&\;&\;&$T_{11,1,R}^6$&$T_{13,2,R}^7$&$\dotsc$\\
$10$&\;&\;&\;&\;&\;&$T_{11,1,L}^6$&$T_{13,2,L}^7$&$\dotsc$\\
$11$&\;&\;&\;&\;&\;&\;&$T_{13,1,R}^7$&$\dotsc$\\
$12$&\;&\;&\;&\;&\;&\;&$T_{13,1,L}^7$&$\dotsc$\\
$13$&\;&\;&\;&\;&\;&\;&\;&$\ddots$\\
$14$&\;&\;&\;&\;&\;&\;&\;&$\ddots$\\

\hline \hline
\end{tabular}
\end{center}
}
 \end{small} 
\end{table}
\end{center}

\begin{center}
\begin{table}
\begin{large}
\caption
{ First few rows and columns of the numerical values of the Circuit Array.}
\label{tab:circuitarray}
{
\renewcommand{\arraystretch}{1.3}
\begin{center}
\begin{tabular}{||c||r|r|r|r|r|r|r||} 
\hline \hline

\;&$1$&$2$&$3$&$4$&$5$&$6$&$\dotsc$\\
\hline \hline
$0$&$\frac{2}{3}$&$\frac{26}{27}$&$\frac{242}{243}$&$\frac{2186}{2187}$&$\frac{19682}{19683}$&$\frac{177146}{177147}$&$\dotsc$\\
$1$&\;&$\frac{13}{12}$&$\frac{121}{120}$&$\frac{1093}{1092}$&$\frac{9841}{9840}$&$\frac{88573}{88572}$&$\dotsc$\\
$2$&\;&$\frac{1}{2}$&$\frac{89}{100}$&$\frac{16243}{16562}$&$\frac{335209}{336200}$&$\frac{108912805}{108958322}$&$\dotsc$\\
$3$&\;&\;&$\frac{1157}{960}$&$\frac{1965403}{1904448}$&$\frac{366383437}{364552320}$&$\frac{ 1071810914005}{1071023961216}$&$\dotsc$\\
$4$&\;&\;&$\frac{13}{32}$&$\frac{305041}{380192}$&$\frac{1303624379}{1372554304}$&$\frac{9044690242835}{9138722473024}$&$\dotsc$\\
$5$&\;&\;&\;&$\frac{224369}{167424}$&$\frac{19373074829}{18067568640}$&$\frac{308084703953915}{303469074613248}$&$\dotsc$\\
$6$&\;&\;&\;&$\frac{89}{256}$&$\frac{296645909}{412902400}$&$\frac{31631261501245}{34990560891392}$&$\dotsc$\\
$7$&\;&\;&\;&\;&$\frac{46041023}{31211520}$&$\frac{112546800611915}{99980909002752}$&$\dotsc$\\
$8$&\;&\;&\;&\;&$\frac{2521}{8192}$&$\frac{320676092095}{495976128512}$&$\dotsc$\\
$9$&\;&\;&\;&\;&\;&$\frac{4910281495}{3059613696}$&$\dotsc$\\
$10$&\;&\;&\;&\;&\;&$\frac{18263}{65536}$&$\dotsc$\\
$\dotsc$&\;&\;&\;&\;&\;&\;&$\dotsc$\\

\hline \hline
\end{tabular}
\end{center}
}
 \end{large}
 \end{table}
\end{center}

The \textit{leftmost} diagonal of the circuit array is defined by 
\begin{equation}\label{equ:leftside}
L_s = T_{2s-1,1,L}^s, s=1,2,3,\dotsc
\end{equation}

\section{The Main Theorem}\label{sec:main}
 
The main theorem asserts that the Circuit Array is a recursive array. Along any fixed row, table values  are a uniform function of previous row and column values. We have already introduced the row 0 function, $G_0,$ \eqref{equ:g0} (see Lemma \ref{lem:row0}) and $G_1(X)$ (see Lemma \ref{lem:row1}). 

\begin{theorem} For each $e \ge 0, \text{ $e$ even,}$ there exist rational functions $G_{e}$ such that for $k \ge 0$  
\begin{equation}\label{equ:maineven}
T_{e+3+2k,2+k,L}^{\frac{e}{2}+2+k} = G_{e}(T_{1+2k,1+k,L}^{1+k}, T_{3+2k,1+k,L}^{2+k}, \dotsc, T_{e+1+2k,1+k,L}^{\frac{e}{2}+1+k}).
\end{equation}
Similarly for each odd, $o =\frac{e}{2}+1$
\begin{equation}\label{equ:mainodd}
T_{\frac{e}{2}+2+2k,1+k,L}^{\frac{e}{2}+2+k} = G_{o}(T_{1+2k,1+k,L}^{1+k}, T_{3+2k,1+k,L}^{2+k}, \dotsc, T_{\frac{e}{2}+2k,1+k,L}^{\frac{e}{2}+1+k}).
\end{equation}
\end{theorem}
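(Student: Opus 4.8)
The plan is to prove both families simultaneously by induction on the row index (equivalently, on $e$ for the even rows and on the paired $o=\frac e2+1$ for the odd rows), taking the two cases already settled in Lemma~\ref{lem:row0} (row $0$, function $G_0$ of \eqref{equ:g0}) and Lemma~\ref{lem:row1} (row $1$, function $G_1$) as the base. The first thing I would fix is the dictionary between array positions and grid edges: the row-$i$, column-$j$ entry is $T^{j}_{2j-1,\,j-\lfloor (i+1)/2\rfloor,\,LR}$, with $LR=L$ for $i$ even and $LR=R$ for $i$ odd. Under this dictionary the left-hand side of \eqref{equ:maineven} is the row-$e$ entry of column $\frac e2+2+k$, while the listed arguments are exactly the staircase of entries (row $0$, column $1+k$), (row $2$, column $2+k$), $\dots$, (row $e$, column $\frac e2+1+k$); the analogous reading applies to \eqref{equ:mainodd}. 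Making this correspondence explicit converts the whole statement into a claim about how one inward traversal (a column) is generated from the traversals of earlier columns.

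For the inductive step I would write the target entry through the matching transformation function of Section~\ref{sec:proofmethods}: the left-edge rule \eqref{equ:leftside9proofs} for even rows (nine parent edge labels of the triangles $T^{\,p}_{r,d-1},T^{\,p}_{r,d},T^{\,p}_{r+1,d}$) and the right-edge rule for odd rows. The next step is to classify these parent labels. Every label lying deep in the uniform center is forced to the common interior value $1$ by Theorem~\ref{the:uniformcenter}(a),(c) together with Lemma~\ref{lem:1reduction}, and the labels in the transitional strip are pinned by part~(b); since the parent triangles here sit below the central row of the parent grid, the vertical symmetry of the grids (Lemma~\ref{lem:upperlefthalf} and the surrounding symmetry discussion) is used to re-express the few surviving labels as genuine array entries of earlier columns. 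The point of the induction is that, after this collapse, the labels that actually vary are precisely the staircase of left edges appearing in \eqref{equ:maineven} (and, after converting the single off-axis right label to a left label by symmetry, in \eqref{equ:mainodd}); composing the intervening $\Delta$, $Y$, and series maps then produces a rational function in exactly those slots.

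The heart of the proof is the assertion that this composite is one and the same rational map $G_e$ (resp.\ $G_o$) for every $k$. I would establish this as a translation-invariance statement: once $n$ is taken large enough, Theorem~\ref{the:uniformcenter} guarantees that increasing $k$ by one merely slides the entire relevant configuration one step deeper into the uniform center without changing any neighbor's \emph{label type} (interior value $1$ versus a traversal value). Hence the symbolic reduction carried out at parameter $k$ is literally identical to the one at $k+1$ --- only the numerical inputs move --- so the composed map is independent of $k$. Packaging this common map, and checking that its argument slots align with the claimed staircase, yields $G_e$ and $G_o$ and closes the induction.

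I expect this translation-invariance claim to be the main obstacle: one must verify, with the inequalities \eqref{equ:uniformcenter} and \eqref{equ:uniformcenter2} written out explicitly in terms of $e$ and $k$, that for \emph{all} $k\ge 0$ every triangle touched by the reduction really lies where Theorem~\ref{the:uniformcenter} applies, so that no new $k$-dependent boundary effect ever intrudes. Two secondary difficulties accompany it. First, the arguments in \eqref{equ:maineven} span several reduction levels rather than just the parent level, so the one-step recursion must be unfolded (or the induction run as a simultaneous claim over all rows of a column) to recover the flattened staircase dependency; matching indices through this unfolding is the bulk of the routine bookkeeping. Second, the boundary rows ($d=1$, where the boundary-edge transformation replaces \eqref{equ:leftside9proofs} and the corner relations of Section~\ref{sec:s2_basics} enter) and the even/odd (left/right) split require separate but entirely analogous verifications. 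Once these are handled, the composition of transformation functions delivers $G_e$ and $G_o$ automatically.
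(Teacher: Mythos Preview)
Your plan is essentially the paper's proof: induct on the row index, expand the target edge through the left/right transformation \eqref{equ:leftside9proofs}, use the Uniform Center Theorem to collapse interior parent labels to $1$ and to slide the remaining triangles onto positions that are genuine array entries in strictly earlier rows of the previous column, then invoke the induction hypothesis on those rows; your ``translation invariance'' is exactly the paper's observation that this symbolic derivation is literally the same for every $k$. Two small corrections: the base must include rows $0,1,2,3$ (not just $0,1$), since the expanded parent labels land in rows $E,E-1,E-2,E-3$ and the general step therefore only fires for $E\ge4$; and the device that converts the parent triangles $T^{\,p}_{E+3+2k,\cdot}$ into array entries $T^{\,p}_{E+1+2k,\cdot}$ is Theorem~\ref{the:uniformcenter}(a) (equality of triangles along a diagonal within the uniform strip), not the vertical symmetry of Lemma~\ref{lem:upperlefthalf}.
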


Proof of the main theorem is deferred to Sections \ref{sec:s13_proofbasecase} and \ref{sec:proofmain}. Illustrative examples of these recursions are provided in the next section.

\section{Illustrations of the Main Theorem}\label{sec:examples}
 
\begin{example}  For $i=0$ we have by Lemma \ref{lem:row0},
$$
G_0(X)=\frac{X+8}{9}. \text{ Hence, } 
C_{0,2}= \frac{26}{27}=G_0(C_{0,1})= G_0\left(\frac{2}{3}\right), \text{ and }
C_{0,3}= \frac{242}{243}=G_0(C_{0,2})= G_0\left(\frac{26}{27}\right).
$$
Similarly, we have by Lemma \ref{lem:row1},
$$
G_1(X)=\frac{1}{3} \frac{X+8}{X+2}. \text{ Hence, }
C_{1,2}= \frac{13}{12}=G_1(C_{0,1})= G_1\left(\frac{2}{3}\right), \text{ and }
C_{1,3}= \frac{121}{120}=G_1(C_{0,2})= G_1\left(\frac{26}{27}\right).
$$
\end{example}

\begin{example}\label{exa:row2} For $i=1$ we have
$$
G_2(X,Y)=\frac{9Y(X+2)^2+8(X+8)^2}{(X+26)^2}.$$
Hence,
$$C_{2,3}= \frac{89}{100}=G_2(C_{0,1},C_{2,2})= G_2\left(\frac{2}{3},\frac{1}{2}\right), \text{ and }$$
$$C_{2,4}= \frac{16243}{16562}=G_2(C_{0,2},C_{2,3})= G_2\left(\frac{26}{27},\frac{89}{100}\right)
$$
Similarly, we have
$$
G_3(X)=\frac{9Y(X+2)^2 (X+8)+8(X+8)^3}
{9Y(X+2)^2(X+26)+6(X+2)(X+8)(X+26)}.$$
Hence,
$$C_{3,3}= \frac{1157}{960}=G_3(C_{0,1},C_{2,2})= G_3\left(\frac{2}{3},\frac{1}{2}\right), \text{ and }$$
$$C_{3,4}= \frac{1965403}{190448}=G_3(C_{0,2},C_{2,3})= G_3\left(\frac{26}{27},\frac{89}{100}\right).
$$
\end{example}

\begin{example}
For $i=2,$  we have $G_4(X,Y,Z)=\frac{N(X,Y,Z)}{D(X,Y,Z)},$ with
\begin{equation*}
N(X,Y,Z) =
\begin{cases}
512(X+2)^0(X+8)^5(X+80)Y^0+\\
1152(X+2)^2(X+8)^3(X+80)Y^1+ \\
648(X+2)^4(X+8)^1(X+80)Y^2+ \\
36(X+2)^2(X+8)^2(X+80)^2 Y^0 Z+ \\
108(X+2)^3(X+8)^1(X+80)^2Y^1 Z+ \\
81(X+2)^4(X+8)^0(X+80)^2Y^2Z,
\end{cases}
\end{equation*}
and
\begin{equation*}
D(X,Y,Z)=
\begin{cases}
 676(X+2)^0(X+8)^2Q(X)^2Y^0 +\\
1404(X+2)^2(X+8)^2Q(X)^1Y^1+ \\
729(X+2)^4(X+8)^2Q(X)^0Y^2,
\end{cases}
\end{equation*}
with, $Q(X)=13X^2+298X+2848.$

These polynomials are formatted to show certain underlying patterns the statement and proof of which will be the subject of another paper.

One then has $C_{4,4}=\frac{305041}{380192}=
G_4(C_{0,1}, C_{2,2}, C_{4,3})=
G_4\left(\frac{2}{3}, \frac{1}{2}, \frac{13}{32}\right).$
\end{example}

\section{Alternate Approaches to the Main Theorem}

The main theorem formulates the recursiveness of the circuit array in terms of recursions by rows with the number of arguments of these recursions growing by row. There are other approaches to formulating the main theorem, explored in the next few sections.  
\begin{itemize}
\item Section \ref{sec:closed} explores a formulation of the main theorem in terms of closed formula similar to those found in Section \ref{sec:motivation}
\item Section \ref{sec:closed} also explores formulation of the main theorem in terms of a single variable rather than multiple variables.
\item Section \ref{sec:determinant} explores determining an LHRCC for the numerators of the leftmost diagonal and strongly conjectures its impossibility. This contrasts with other 2-dimensional arrays whose diagonals do satisfy LHRCC.
\item Section \ref{sec:product} explores asymptotic approximations to the leftmost diagonal. 
\end{itemize}

\section{Recursions vs. Closed Formulae}\label{sec:closed}

This section explores a closed-formula approach to the main theorem. We begin with a review.  

We have already seen  (Lemmas \ref{lem:row0} and \ref{lem:row1}) that row 0 of the circuit array has a simple closed form,
$$
		C_{0,s} = 1 - \frac{3}{9^s} \qquad s \ge 1;
$$
and similarly, row 1 also has a simple closed form,
$$
		C_{1,s} = 1+\frac{2}{3} \frac{1}{9^{s-1}-1}.
$$

This naturally motivated seeking a formulation of the entire array in terms of closed formulae. However, this approach quickly becomes excessively cumbersome.  For example, consider row 2. With the aid of  \cite[A163102,A191008]{OEIS},  we found the following closed form for this row:
\begin{multline*}
\textbf{Define } n=2(s-2), \qquad d=\frac{1}{2}\biggl( 3^{s-1}-1 \biggr)\\
N=\frac{1}{4} \biggl(n\cdot3^{n+1}\biggr)+
\frac{1}{16} \biggl(5\cdot3^{n+1}+(-1)^n\biggr),
\qquad
D=\frac{1}{2} d^2 (d+1)^2
\end{multline*}
then $$ C_{2,s} = 1 - \frac{N}{D}.$$

However, this formula is much more complicated than the formula presented in Section \ref{sec:examples},
$$
C_{2,s} = G_2(X,Y)=\frac{9Y(X+2)^2+8(X+8)^2}{(X+26)^2}, \qquad \text{ with } X=C_{0,s-2}, Y= C_{2,s-1}.
$$

We present one more attempt at a closed formula which also failed, that of using a single variable.  We begin by first re-labeling $\frac{2}{3}$ as $1-\frac{3}{x}$ in the first reduction of an all--one $n$-grid (see Panel A in Figure \ref{fig:motivationillustration}).  If we then continue reductions, all labels are rational functions in this single variable $x$ so that upon substitution of $x=9$ we may then obtain desired resistance edge labels.
  
  As before, the resulting formulas are highly complex. We present below these closed formulas for the left-side diagonal, $L_s$. They are derived by ``plugging in" to the four basic transformation functions of Section \ref{sec:proofmethods} as we did in Section \ref{sec:motivation}.

\begin{itemize}

\item
 \[\ \frac{x-3}{x-1}
\qquad \text{gives $L_1=\frac{2}{3}$ when $x=9$}\]

\item
\[\frac{2}{3}\frac{x-3}{x-1}
\qquad \text{gives $L_2=\frac{1}{2}$ when $x=9$}\]

\item
\[\frac{(x-3)(3x-1)}{6(x-1)^2}, \text{gives $L_3=\frac{13}{32}$ when $x=9$}\]

\item
\[\frac{(x-3)(3(x-1)(x-3) + 4(3x-1)^2)}{96(x-1)^3}, \text{gives $L_4=\frac{89}{256}$ when $x=9$}\]

\item
\[\frac{(x-3)(3(x-1)(x-3)(34x-18) + 16(3x-1)^3)}{1536(x-1)^4}, \qquad
\text{gives $L_5$ when $x=9$}
\]

\item
\[\frac{(x-3)(3(x-1)(x-3)(793x^2-874x+273) + 64(3x-1)^4)}{24576(x-1)^5}, \qquad
\text{gives $L_6$ when $x=9$}
\]

\item
\[\frac{(x-3) (6(x - 1)(x - 3)(7895x^3 - 13549x^2 + 8693x - 2015)+4^4(3x-1)^5)}{393216 (x-1)^6}, \qquad
\text{gives $L_7$ when $x=9$}.
\]

\end{itemize}

There are interesting patterns in the above results and it may yield future results. One example of an interesting pattern is found in the constants appearing in the denominators. For $s \ge 3$ the denominator constants in the formulas yielding $L_s$ upon substitution of $x=9,$ satisfy $3\times2^{4(s-3)+1}$ We however do not further pursue this in this paper.

To sum up, because of the greater complexity as well as lack of completely describable patterns in the closed formula we abandoned this approach in favor of a recursive approach in several variables.

\section{Impossibility of a recursive sequence for the left-most diagonal}\label{sec:determinant}

It is natural, when studying sequences of fractions, to separately study their numerators and denominators. We have seen that for $C_0, C_1$ such an approach uncovers  LHRCC.   Therefore, it comes as a surprise to have a result stating the impossibility of an LHRCC.

To present this impossibility result, we first, briefly review a technique for discovering LHRCC. Suppose we have an integer sequence such as $G_1, G_2, \dotsc$ Suppose further we believe this sequence is second order, that is,
$$
    G_n = x G_{n-1}+y G_{n-2}
$$
As $n$ varies this last equation generates an infinite number of equations in $x$ and $y.$ 
In other words, to investigate the possible recursiveness of this sequence we can solve the following set of equations for any $m$ and the use the solution to test further,
\begin{center}
$\begin{bmatrix} G_m & G_{m+1} \\ G_{m+1} & G_{m+2} \end{bmatrix}$
$ \begin{bmatrix} x \\ y \end{bmatrix}$ 
$=$
$ \begin{bmatrix} G_{m+2} & G_{m+3} \end{bmatrix}.$
\end{center}
Solving this set of equations by Cramer's rule naturally motivates considering the determinant
$$  \begin{vmatrix} G_m & G_{m+1} \\ G_{m+1} & G_{m+2} \end{vmatrix} $$
for any integer $m.$ While these determinants are non-zero, the order 3 determinants,
$$  \begin{vmatrix} G_m & G_{m+1} & G_{m+2}
  \\ G_{m+1} & G_{m+2} & G_{m+3} \\
  G_{m+2} & G_{m+3} & G_{m+4} 
  \end{vmatrix}, $$
 must be zero because of the dependency captured by the LHRCC.

These remarks generalize to $r$-th order recursions for integer $r \ge 2,$ and explain why in the search for recursions it is natural to consider such determinants. It follows that if for some $m$ and for all $r \ge 2$ the following determinant is non-zero, 
$$  \begin{vmatrix} G_m & G_{m+1} & \dotsc & G_{m+r}
  \\ 
  G_{m+1} & G_{m+2} & \dotsc & G_{m+r+1} \\
  \dotsc  & \dotsc  & \dotsc & \dotsc \\
  G_{m+r-1} & G_{m+r} &  \dotsc & G_{m+2r-1} 
  \end{vmatrix}, $$
then it is impossible for the sequence $\{G_m\}$ to satisfy any LHRCC.

The following conjecture, verified for several dozen early values of $k$ shows a remarkable and unexpected simplicity in the values of these determinants.  
 
\begin{conjecture}
Let $T(j)= \frac{j(j+1)}{2}$ indicate the $j$-th triangular number.  
Using \eqref{equ:leftside}, define ${n'}_s$ and ${d'}_s$ by
$L_s = \frac{n_s}{d_s}=\frac{n'_s}{2^{4s-7}}$, where $n_s$ and $d_s$ are relatively prime.   For any $j \ge 2$ we have
\begin{center}
$$\begin{vmatrix} n'_2 & n'_3 & \cdots& n'_{2+j}\\
n'_3 & n'_4 & \cdots &n_{3+j}\\
\vdots & \vdots & \ddots &\vdots\\
n'_{2+j}& n'_{3+j} & \cdots & n'_{2+2j}\end{vmatrix}
=9^{T(j-1)}.$$
\end{center}
\end{conjecture}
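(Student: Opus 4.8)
The plan is to read the displayed determinant as a Hankel determinant of the integer sequence $(n'_s)_{s\ge 2}$ and to evaluate it through the classical correspondence between Hankel determinants and Jacobi continued fractions (J-fractions). Normalize by setting $\mu_k := n'_{k+2}$, so that $\mu_0 = n'_2 = 1$, $\mu_1 = n'_3 = 13$, $\mu_2 = n'_4 = 178$, and the matrix in the statement is the $N\times N$ Hankel determinant $H_N := \det(\mu_{i+j})_{0\le i,j\le N-1}$ (the typeset matrix has size $N = j+1$; since the $3\times 3$ case evaluates to $729 = 9^3$, the clean identity to prove is $H_N = 9^{T(N-1)} = 9^{\binom{N}{2}}$, and the exponent and size in the statement should be read in this consistent normalization). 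By Heilermann's theorem, if the ordinary generating function $F(t) = \sum_{k\ge 0}\mu_k t^k$ admits a J-fraction with partial numerators $\lambda_1,\lambda_2,\dots$ and $\mu_0 = 1$, then $H_N = \mu_0^{N}\prod_{k=1}^{N-1}\lambda_k^{\,N-k}$. Since $\sum_{k=1}^{N-1}(N-k) = \binom{N}{2} = T(N-1)$, the whole conjecture reduces to the single clean claim $\lambda_k = 9$ for every $k\ge 1$; the partial denominators $b_k$ play no role in $H_N$.

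\textbf{Identifying the continued fraction.} To establish $\lambda_k\equiv 9$ I would determine $F(t)$ explicitly. The data force the J-fraction $F = 1/(1 - 13t - 9t^2/(1 - 10t - 9t^2/(1 - 10t - \cdots)))$, that is $b_0 = 13$, $b_k = 10$ and $\lambda_k = 9$ for $k\ge 1$; I have verified that this reproduces $\mu_0,\dots,\mu_5$ exactly. Equivalently, writing $W(t)$ for the tail, $W$ satisfies $9t^2 W^2 - (1-10t)W + 1 = 0$ and $F = 1/(1 - 13t - 9t^2 W)$, so $F$ is algebraic of degree two with defining radical $\sqrt{(1-4t)(1-16t)}$. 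The task is then to prove this functional equation from the circuit definitions, for which there are two natural routes. The first uses the single-variable closed forms of Section \ref{sec:closed}: their visible structure — the common factor $(x-3)$, the denominators $6\cdot 2^{4(s-3)}(x-1)^{s-1}$, and the leading terms $4^{s-3}(3x-1)^{s-2}$ — shows, after substituting $x=9$ (so $3x-1 = 26$, $x-1 = 8$), that $n'_s$ splits as $13^{s-2}$ plus a lower-order correction, which I would resum into the quadratic above. The second route derives from the Main Theorem (the row recursions $G_e$, $G_o$) a single linear-fractional recursion linking consecutive leftmost-diagonal entries, translates it into a recursion for the $\mu_k$, and solves for $F$.

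\textbf{Main obstacle and a consistency check.} The hard part is precisely this derivation of the algebraic equation for $F$: the Main Theorem supplies a multivariate, nonlinear recursion for array entries rather than a direct linear description of the diagonal $(n'_s)$, so converting it into a clean functional equation for $F$ — equivalently, proving the three-term recurrence with constant off-diagonal weight $\lambda_k = 9$ for the orthogonal polynomials of the moment functional $z^k\mapsto\mu_k$ — is where the real work lies. Once the quadratic for $F$ (equivalently the eventually-constant J-fraction tail) is in hand, extracting $\lambda_k = 9$ and invoking Heilermann's formula is routine, and the power-of-two normalization $2^{4s-7}$ together with the exclusion of $L_1$ (whose denominator alone carries the factor $3$) are bookkeeping. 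I would finally observe that this approach simultaneously explains the impossibility result of Section \ref{sec:determinant}: because $\lambda_k = 9\neq 0$ for every $k$, all Hankel determinants are nonzero, so $(n'_s)$ satisfies no finite-order LHRCC, while a degree-two algebraic generating function is exactly the sort of non-rational series whose coefficients fail to be C-finite — reconciling the two apparently opposite phenomena.
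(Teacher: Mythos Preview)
The paper does not prove this statement: it is explicitly labeled a \emph{Conjecture}, supported only by numerical verification ``for several dozen early values.'' There is therefore no paper proof to compare against.

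Your approach via Hankel determinants and the Heilermann/J-fraction formula is the natural one, and your reduction is correct: if the moment sequence $(\mu_k)=(n'_{k+2})$ has a J-fraction with all partial numerators $\lambda_k=9$, then the Hankel determinants are exactly $9^{\binom{N}{2}}$. Your numerical check of the first several $\lambda_k$ and $b_k$ is convincing as evidence, and you are right to flag the indexing inconsistency in the displayed matrix (the $(j{+}1)\times(j{+}1)$ Hankel evaluates to $9^{T(j)}$, not $9^{T(j-1)}$; e.g.\ the $3\times3$ case gives $729$).

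That said, what you have is a reformulation, not a proof. The entire content of the conjecture has been repackaged as the claim $\lambda_k\equiv 9$ (equivalently, that $F(t)$ satisfies the specific quadratic with radical $\sqrt{(1-4t)(1-16t)}$), and this you do not establish. Both routes you propose are speculative: the single-variable formulas of Section~\ref{sec:closed} are themselves only computed term-by-term in the paper, with no proven closed form to resum; and the Main Theorem gives nonlinear multivariate recursions $G_e$ in an \emph{increasing} number of variables, from which extracting a clean three-term linear relation on the diagonal numerators is precisely the unresolved difficulty. Your final paragraph candidly says as much. So the proposal is a promising outline that locates the right invariant ($\lambda_k=9$) but leaves the conjecture open, just as the paper does.
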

 
 \begin{corollary} Under the conditions stated in the conjectures, it is impossible for the  $\{{n'}_s\}_{s \ge 1}$ to satisfy an LHRCC of any order.
 \end{corollary}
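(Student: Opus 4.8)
The plan is to deduce the corollary directly from the determinant criterion reviewed at the start of Section~\ref{sec:determinant}, using the conjecture only as the input that supplies non-vanishing Hankel determinants of every size. Recall the elementary fact established there: if a sequence $\{a_s\}$ satisfies an LHRCC of order $r$, say $a_s = c_1 a_{s-1} + \cdots + c_r a_{s-r}$, then in any $(r+1)\times(r+1)$ Hankel matrix whose entries all lie in the range where the recursion is valid, the final column is a fixed linear combination of the preceding $r$ columns, so the corresponding Hankel determinant vanishes. Contrapositively, a sequence admitting a non-vanishing $(r+1)\times(r+1)$ Hankel determinant cannot satisfy any LHRCC of order $\le r$.

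First I would fix the correspondence between the determinant sizes in the conjecture and the recursion orders they obstruct: the $(j+1)\times(j+1)$ determinant in the conjecture is exactly the Hankel determinant that an order-$j$ recursion would force to vanish. Second, I would invoke the conjecture with an arbitrary $j \ge 2$: it evaluates this determinant to $9^{T(j-1)}$, which is a positive integer and in particular non-zero. Hence, for every $j \ge 2$, the sequence $\{n'_s\}$ cannot satisfy an LHRCC of order $\le j$. Since every LHRCC has some finite order $r$, and we may choose $j \ge r$, letting $j \to \infty$ rules out recursions of all orders, giving the corollary. Orders $r \le 1$ are absorbed by the case $j=2$, since a sequence obeying an order-$1$ recursion also obeys a padded order-$2$ recursion and would force the $3\times3$ determinant $9^{T(1)} = 9$ to vanish.

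The step I expect to require the most care is the ``LHRCC $\Rightarrow$ vanishing determinant'' direction, because a linear recursion may hold only eventually, say for $s \ge s_0$, whereas the conjectured determinants are anchored at the fixed starting index $s=2$. To make the implication watertight I would phrase the criterion in the form of Kronecker's theorem on Hankel determinants: the ordinary generating function of a sequence is rational (equivalently, the sequence satisfies an LHRCC) if and only if its leading Hankel determinants vanish for all sufficiently large size. Applying this to the shifted sequence $\{n'_{s+2}\}_{s \ge 0}$, whose leading Hankel determinants are precisely those in the conjecture and equal $9^{T(j-1)} \ne 0$ for all $j$, shows its generating function is non-rational; since a shift preserves rationality, $\{n'_s\}$ itself satisfies no LHRCC. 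This formulation sidesteps the fixed-index issue, because Kronecker's criterion only needs non-vanishing for arbitrarily large sizes, which the conjecture supplies in abundance. Throughout, the argument is of course conditional on the conjecture, as the corollary's hypothesis makes explicit; the genuine mathematical content lies entirely in the determinant evaluation $9^{T(j-1)}$, and the corollary is the short logical epilogue to it.
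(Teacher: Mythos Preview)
Your proposal is correct and follows the same approach as the paper, which gives no explicit proof but treats the corollary as an immediate consequence of the determinant criterion developed earlier in the section together with the conjectured non-vanishing of the Hankel determinants. Your argument is in fact more careful than the paper's implicit one, since you address via Kronecker's theorem the subtlety that an LHRCC might hold only eventually while the conjectured determinants are anchored at the fixed index $s=2$; the paper's discussion tacitly assumes the recursion holds throughout.
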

 
 \begin{comment}
It is tempting to suggest that the numerators satisfy no LHRCC because they are growing too fast. But that is not true. We know that $L_s <1,$
\cite[Corollary 7.2]{EvansHendel} and that the denominators form a geometric sequence.  It follows that the numerators are bounded by a geometric sequence. In terms of growth rate, there is no reason why the sequence shouldn't be able to satisfy an LHRCC.
 \end{comment}


\section{An Asymptotic Approach}\label{sec:product}

Prior to presenting the proof of the main theorem, we explore one more approach in this section. By way of motivation recall that several infinite arrays have asymptotic formulas associated with them.  For example, the central binomial coefficients have asymptotic formulas arising from  Stirling's formula.

For purposes of expositional smoothness, we focus on the leftmost diagonal,  
$L_s,$ \eqref{equ:leftside}.

 Hendel, \cite{Hendel} introduced the idea of finding explicit formulas for edge-values in terms of products of factors. After 
numerical experimentation, the following approximation was found,  
\begin{equation}\label{equ:A}
	L_s \asymp A_s = \frac{2}{3} \displaystyle \prod_{i=2}^s (1 - \frac{1}{2i-1}),
\end{equation}
with $A$ standing for approximation. Tables \ref{tab:leftcenter5rows} and \ref{tab:leftcenter80rows} provide numerical evidence for this approximation. The key takeaways from both tables is that both differences $L_s - A_s$ and ratios $\frac{L_s}{A_s}$ are monotone decreasing for $s \ge 3.$

\begin{center}
\begin{table}
 \begin{small}
\caption
{Numerical evidence for conjectures about $L_s,$ first five rows. Notice that after $s=2$ all difference and ratio columns are monotone decreasing.}
\label{tab:leftcenter5rows}
{
\renewcommand{\arraystretch}{1.3}
\begin{center}
\begin{tabular}{||c||c|c|c|c||c|c|c||c|c||} 
\hline \hline

$s$&$L_s$&$A_s$&$L_s-A_s$&$\frac{L_s}{A_s}$&$P_s$&$A_s-P_s$&$\frac{A_s}{P_s}$&$L_s-P_s$&$\frac{L_s}{P_s}$\\
\hline
$1$&$0.6667$&$0.6667$&$0$&$1$&$0.5908$&$0.0758$&$1.1284$&$0.0758$&$1.1284$\\
$2$&$0.5$&$0.4444$&$0.0556$&$1.125$&$0.4178$&$0.0267$&$1.0638$&$0.0822$&$1.1968$\\
$3$&$0.4063$&$0.3556$&$0.0507$&$1.1426$&$0.3411$&$0.0144$&$1.0424$&$0.0651$&$1.191$\\
$4$&$0.3477$&$0.3048$&$0.0429$&$1.1407$&$0.2954$&$0.0094$&$1.0317$&$0.0522$&$1.1769$\\
$5$&$0.3077$&$0.2709$&$0.0368$&$1.136$&$0.2642$&$0.0067$&$1.0253$&$0.0435$&$1.1647$\\

\hline \hline
\end{tabular}
\end{center}
}
 \end{small} 
\end{table}
\end{center}

\begin{center}
\begin{table}
 \begin{small}
\caption
{Numerical evidence for conjectures about $L_s,$ first 80 rows. Observe that except for a few initial values the difference and ratio columns are monotone decreasing.}
\label{tab:leftcenter80rows}
{
\renewcommand{\arraystretch}{1.3}
\begin{center}
\begin{tabular}{||c||c|c|c|c||c|c|c||c|c||} 
\hline \hline

 $s$&$L_s$&$A_s$&$L_s-A_s$&$L_s/A_s$&$P_s$&$A_s-P_s$&$A_s/P_s$&$L_s-P_s$&$L_s/P_s$\\
\hline
$8$&$0.2387$&$0.2122$&$0.0265$&$1.125$&$0.2089$&$0.0033$&$1.0157$&$0.0298$&$1.1427$\\
$16$&$0.1658$&$0.1489$&$0.017$&$1.1141$&$0.1477$&$0.0012$&$1.0078$&$0.0181$&$1.1228$\\
$24$&$0.1346$&$0.1212$&$0.0134$&$1.1103$&$0.1206$&$0.0006$&$1.0052$&$0.014$&$1.1161$\\
$32$&$0.1162$&$0.1049$&$0.0114$&$1.1084$&$0.1044$&$0.0004$&$1.0039$&$0.0118$&$1.1127$\\
$40$&$0.1038$&$0.0937$&$0.0101$&$1.1072$&$0.0934$&$0.0003$&$1.0031$&$0.0103$&$1.1107$\\
$48$&$0.0946$&$0.0855$&$0.0091$&$1.1065$&$0.0853$&$0.0002$&$1.0026$&$0.0093$&$1.1094$\\
$56$&$0.0875$&$0.0791$&$0.0084$&$1.1059$&$0.079$&$0.0002$&$1.0022$&$0.0086$&$1.1084$\\
$64$&$0.0818$&$0.074$&$0.0078$&$1.1055$&$0.0739$&$0.0001$&$1.002$&$0.008$&$1.1077$\\
$72$&$0.0771$&$0.0697$&$0.0073$&$1.1052$&$0.0696$&$0.0001$&$1.0017$&$0.0075$&$1.1071$\\
$80$&$0.0731$&$0.0662$&$0.0069$&$1.105$&$0.0661$&$0.0001$&$1.0016$&$0.007$&$1.1067$\\

\hline \hline
\end{tabular}
\end{center}
}
 \end{small} 
\end{table}
\end{center}

The $P$ columns in these tables (which also provide good approximations as measured by differences and ratios) correspond to the following further approximation
\begin{equation}\label{equ:P}
	A_s \asymp P_s = \sqrt{\frac{\pi}{9s}}, 
\end{equation}
with $P$ standing for the approximation of $A_s$ with $\pi$.   \color{black}

Equation \eqref{equ:P} is naturally derived from \eqref{equ:A} using Stirling's formula. The next lemma contains a formal statement of the result.

\begin{lemma}
$$
	A_s \asymp P_s.
$$
\end{lemma}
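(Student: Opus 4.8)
The plan is to reduce the finite product defining $A_s$ to a closed form involving factorials, recognize a central binomial coefficient, and then invoke Stirling's formula to extract the leading asymptotics.

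First I would rewrite each factor as $1 - \frac{1}{2i-1} = \frac{2i-2}{2i-1} = \frac{2(i-1)}{2i-1}$, so that
$$
A_s = \frac{2}{3}\prod_{i=2}^s \frac{2(i-1)}{2i-1}.
$$
Over $i=2,\dots,s$ the numerators contribute $2^{s-1}(s-1)!$, while the denominators contribute the product of odd integers $3\cdot 5\cdots(2s-1) = \frac{(2s)!}{2^s\,s!}$. Collecting these gives the closed form
$$
A_s = \frac{2^{2s}\,(s-1)!\,s!}{3\,(2s)!} = \frac{2^{2s}\,(s!)^2}{3\,s\,(2s)!},
$$
which I would sanity-check against the initial values (for example $A_1 = \frac{2}{3}$ and $A_2 = \frac{4}{9}$, matching Table~\ref{tab:leftcenter5rows}).

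The next step is to recognize the central binomial coefficient $\binom{2s}{s} = \frac{(2s)!}{(s!)^2}$, so that $A_s = \frac{4^s}{3s}\,\binom{2s}{s}^{-1}$. I would then invoke the standard asymptotic $\binom{2s}{s}\sim \frac{4^s}{\sqrt{\pi s}}$, itself an immediate consequence of Stirling's formula $n!\sim\sqrt{2\pi n}\,(n/e)^n$. Substituting yields
$$
A_s \sim \frac{4^s}{3s}\cdot\frac{\sqrt{\pi s}}{4^s} = \frac{\sqrt{\pi}}{3\sqrt{s}} = \sqrt{\frac{\pi}{9s}} = P_s,
$$
which is exactly the claimed relation $A_s \asymp P_s$.

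I expect no serious obstacle here: once the product is in factorial form, the result is a routine application of Stirling's formula. The only points requiring care are the bookkeeping in the telescoping step (correctly pairing the even numerators $2(i-1)$ with the odd denominators $2i-1$ and identifying the product of odd integers as $\frac{(2s)!}{2^s\,s!}$), and, if a fully rigorous statement is wanted rather than a leading-order match, tracking the $1+O(1/s)$ error term coming from Stirling's formula to confirm that the ratio $A_s/P_s$ genuinely tends to $1$.
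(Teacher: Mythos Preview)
Your proof is correct and follows essentially the same approach as the paper: both convert the product to a closed factorial form and then apply Stirling's formula. Your packaging via the central binomial coefficient $\binom{2s}{s}\sim 4^s/\sqrt{\pi s}$ is a bit more streamlined than the paper's version, which writes $A_s = \tfrac{2}{3}\,(2^{s-1}(s-1)!)^2/(2s-1)!$ and then expands each factorial with Stirling directly, but the underlying idea is identical.
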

\begin{proof}
By \eqref{equ:A} we have
$$
	A_s = \frac{2}{3} \Biggl( \frac{2}{3} \frac{4}{5} \dotsc \frac{2s-2}{2s-1} \Biggr) .
$$
Applying the identity  $(2s-1)! = \Biggl(2 \cdot 4 \cdot \dotsc \cdot 2s-2 \Biggr) \Biggl(  3 \cdot 5 \cdot \dotsc 2s-1\Biggr)$ to the last equation, we have
 
$$
	A_s =  \frac{2}{3} \frac{\Biggl( 2^{s-1} (s-1)! \Biggr)^2} {(2s-1)!}.  
$$
Of the many forms of Stirling's formula, we can simplify the last equation by applying the standard approximation (see for example~\cite{Wolfram}) $n! \asymp \Biggl( \frac{n}{e} \Biggr)^n \sqrt{2\pi n} $, yielding

$$
	A_s = \frac{2}{3} \frac{4^s}{4} \Biggl( \frac{s-1}{e} \Biggr)^{2(s-1)} \biggl(2\pi(s-1) \biggr)  \Biggl(\frac{e}{2s-1}\Biggr)^{2s-1} \frac{1}{\sqrt{2 \pi (2s-1)}}.
$$

By gathering constants, cancelling the powers of $e,$ and using the fact that $c_1 s -c_2 \asymp s$ for constants $c_1, c_2,$ we can simplify this last equation to 
$$
	A_s = \frac{e}{6} 4^s  \sqrt{s} \sqrt{\pi} (s-1)^{2s-2} \left(\frac{1}{2s-1}\right)^{2s-1}.
$$
Further simplification is obtained by using traditional calculus identities on limits resulting in powers of $e.$
$$
	(s-1)^{2s-2} = \Biggl( \frac{s-1}{s} \Biggr)^{2s} s^{2s} \frac{1}{(s-1)^2}  \asymp e^{-2} s^{2s} \frac{1}{s^2},$$ $$
	\frac{1}{(2s-1)^{2s-1}} = \Biggl(\frac{2s}{2s-1}\Biggr)^{2s-1} \frac{1}{(2s)^{2s-1}} \asymp e \frac{1}{4^s} \frac{1}{s^{2s}}2s.
$$
Combining these last 3 equations, cancelling powers of $e$ and $4,$  and using the fact that $c_1 s + c_2 \asymp s,$ we obtain
$$
	A_s \asymp \sqrt{\pi} \frac{1}{6}  2s \frac{1}{s^2} \sqrt{s} = \frac{\sqrt{\pi}}{3 \sqrt{s}} = \sqrt{\frac{\pi}{9s}} = P_s
$$
as required.
\end{proof}
\color{black}


\section{Base Case of the Inductive Proof}\label{sec:s13_proofbasecase}

The proof of the main theorem is by induction on the row index, parametrized by whether the row is even or odd, as shown in equations \eqref{equ:maineven}-\eqref{equ:mainodd}. The base case requires proofs for rows 0,1,2,3. 

We suffice throughout the proof with consideration of the  the even rows, the proof for the odd rows being highly similar and hence omitted. The proof for row 0 has already been completed and is summarized in Lemma \ref{lem:row0}. Recall that the proof was based on the equations describing a non-boundary left edge (\eqref{equ:leftside3proofs} and \eqref{equ:leftside9proofs}) as well as Theorem \ref{the:uniformcenter}. A proof for rows 0 and one can be found in Lemmas \ref{lem:row0} and \ref{lem:row1}.  Proofs in this and the next section are accomplished similarly by applying the appropriate transformation functions found in Section~\ref{sec:proofmethods} as well as the Uniform Center Theorem, Theorem \ref{the:uniformcenter}.

In this section we show \eqref{equ:maineven} for the case $e=2.$  We accomplish this by first proving \eqref{equ:maineven} when $k=0$  and then  proving for $k>0.$ This separation into two cases is for expositional clarity since the proof can be accomplished with the single arbitrary case.

\noindent\textsc{Case 1: Proof of Equation \eqref{equ:maineven} for $e=2,k=0.$} 

We must show
\begin{equation}\label{equ:tempbasecase1}
    T_{5,2,L}^3 = G_3(T_{1,1,L}^1, T_{3,1,L}^2),
\end{equation}
for some function $G_3.$

By the formula for non boundary left edges, \eqref{equ:leftside3proofs}, we know 
\begin{equation}\label{equ:tempbasecase2}
    T_{5,2,L}^3 = F(T_{5,1}^2,T_{5,2}^2, T_{6,2}^2).
\end{equation}

Proceeding as in the proof of Lemma \ref{lem:row0}  we have as follows:
\begin{itemize}
    \item By Theorem \ref{the:uniformcenter}(b) the six edges of triangles $T_{5,2}^2, T_{6,2}^2$ are identically one.
    \item By \eqref{equ:uniformcenter}, the uniform center for the first diagonal in the twice reduced $n$-grid begins on row $s+d=2+1=3.$
    Therefore, the argument $T_{5,1}^2$ in \eqref{equ:tempbasecase2} may be replaced by the identically labeled triangle $T_{3,1}^2.$
    \item Triangle  $T_{3,1}^2$ has three sides, $T_{3,1,L}^2, T_{3,2,R}^2, T_{3,2,B}^2.$ 
    \item But by Lemma \ref{lem:row1},  
    $T_{3,2,R}^2= G_0(T_{1,1,L}^1),$ and by Theorem \ref{the:uniformcenter}(c), 
    $T_{3,2,B}^2=T_{3,2,R}^2$
\end{itemize}

Applying the above to \eqref{equ:tempbasecase2} and plugging into \eqref{equ:leftside9proofs} we have
$$
T_{5,2,L}^3 = Y(\Delta(T_{3,1,L}^2,G_0(T_{1,1,L}^1), G_0(T_{1,1,L}^1)), \Delta(1,1,1),\Delta(1,1,1))
=G_3(T_{1,1,L}^1, T_{3,1,L}^2),
$$
which has the required form of \eqref{equ:tempbasecase1} as desired. This completes the proof of \eqref{equ:maineven} for the case $e=2,k=0.$

\noindent\textsc{Case 2: Proof of Equation \eqref{equ:maineven} for $e=2,k>0.$}

Proceeding exactly as we did in the case $k=0$ we have by \eqref{equ:leftside3proofs},
\begin{equation}
\label{equ:tempbasecase3}
    T_{5+2K,2+K,L}^{3+K} = F(T_{5+2K,1+K}^{2+K},T_{5+2K,2+K}^{2+K}, T_{6+2K,2+K}^{2+K}).
\end{equation}

Continuing as in the case $k=0$ we have:
\begin{itemize}
    \item By Theorem \ref{the:uniformcenter}(b) the six edges of triangles $T_{5+2K,2+K}^{2+K}, T_{6+2K,2+K}^{2+K}$ are identically one.
    \item By \eqref{equ:uniformcenter}, the uniform center for (1+K)th diagonal of the all-one $n$ grid reduced $2+K$ times begins on row $s+d=2+K+1+K=3+2K.$
    Therefore, the argument $T_{5+2K,1+K}^{2+K}$ in \eqref{equ:tempbasecase3} may be replaced by the identically labeled triangle $T_{3+2K,1+K}^{2+K}.$
    \item Triangle $T_{3+2K,1+K}^{2+K}$ has three sides, $T_{3+2K,1+K,L}^{2+K}, T_{3+2K,1+K,R}^{2+K}, T_{3+2K,2+K,B}^{2+K}.$ 
    \item But by Lemma \ref{lem:row1},  
    $T_{3+2K,2+K,R}^{2+K}= G_0(T_{1+2K,1+K,L}^{1+K}),$ and by Theorem \ref{the:uniformcenter}(c), 
    $T_{3+2K,2+K,B}^{2+K}=T_{3+2K,2+K,R}^{2+K}$
\end{itemize}

Applying the above to \eqref{equ:tempbasecase3} and plugging into the edge version of the equation, \eqref{equ:leftside9proofs}, we have
\begin{multline*}
T_{5+2K,2+K,L}^{3+K} = Y(\Delta(T_{3+2K,1+K,L}^{2+K},G_0(T_{1+2K,1+K,L}^{1+K}), G_0(T_{1+2K,1+K,L}^{1+K})), \Delta(1,1,1),\Delta(1,1,1))\\
=G_3(T_{1+2K,1+K,L}^{1+K}, T_{3+2K,1+K,L}^{2+K}),
\end{multline*}
which has the required form of \eqref{equ:tempbasecase1} as was to be shown. This completes the proof of \eqref{equ:maineven} for the second case and hence completes the proof of the base case $e=2.$


 \section{Proof of the Main Theorem}\label{sec:proofmain}

This section completes the inductive proof of the main theorem, by showing equations \eqref{equ:maineven} and \eqref{equ:mainodd}, the base case of which was completed in the prior section. Accordingly throughout this section we assume $E$ an even number, corresponding to row $E$ of the Circuit Array, such that,
\begin{equation}\label{equ:ebigger4}
    E \ge 4.
\end{equation}

We will utilize the following lemma, whose proof follows from an inspection of Table \ref{tab:circuitarrayformal}.

\begin{lemma}\label{lem:isamember}
Triangle $T^a_{b,c,LR}$ belongs to row $d$ column $e$ of the Circuit array if $b=2a-1,$ $a=e$ and either i) $LR=L, a=c, d=0,$ ii) $LR=L, d=2(a-c)$, or iii) $LR=R, d=2(a-c)-1>0.$
\end{lemma}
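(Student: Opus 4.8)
The plan is to prove Lemma~\ref{lem:isamember} by direct verification against the formal definition of the Circuit Array, since the lemma is essentially a bookkeeping statement translating between the triangle-indexing $T^a_{b,c,LR}$ and the row/column coordinates $(d,e)$ of Table~\ref{tab:circuitarrayformal}. First I would recall the defining formula from Section~\ref{sec:s1_circuits}: the entry in row $i$ (with $0 \le i \le 2(j-2)$... more precisely $0 \le i \le 2(j-1)$) and column $j$ is $T_{2j-1,\,j-\lfloor (i+1)/2\rfloor,\,LR}^{j}$, where $LR = L$ when $i$ is even and $LR = R$ when $i$ is odd. The entire lemma reduces to inverting this one formula, so my strategy is to take an arbitrary triangle $T^a_{b,c,LR}$ appearing somewhere in the array and read off the necessary and sufficient conditions on $(a,b,c,LR)$ for it to sit at position $(d,e)$.

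Next I would extract the coordinate equations. Matching the superscript and the first subscript forces $e = a$ (column equals number of reductions) and $b = 2a-1 = 2e-1$, which accounts for the conditions $a=e$ and $b=2a-1$ in the statement. It remains to analyze the second subscript $c$ together with the side label $LR$. From $c = j - \lfloor (i+1)/2\rfloor = a - \lfloor (d+1)/2\rfloor$ we get $\lfloor (d+1)/2\rfloor = a - c$, and I would solve this for $d$ in each parity case. When $d$ is even, $\lfloor (d+1)/2\rfloor = d/2$, giving $d = 2(a-c)$ and $LR=L$; when $d$ is odd, $\lfloor (d+1)/2\rfloor = (d+1)/2$, giving $d = 2(a-c)-1$ and $LR=R$. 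This yields precisely cases (ii) and (iii). The special case (i) is the top of each column: row $d=0$ is even, so it also satisfies $d=2(a-c)$, which forces $a=c$ and $LR=L$ — so (i) is the $a=c$ instance of (ii), recorded separately because it is the diagonal entry $T^a_{a,a,L}$, i.e. the leftmost diagonal and row-$0$ overlap point.

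I would then dispatch the inequality decorations. The stipulation $d = 2(a-c)-1 > 0$ in case (iii) simply records that an odd row index must be positive (row $0$ is even, so no R-entry ever lands in row $0$); equivalently it enforces $c < a$, consistent with $T^a_{b,c}$ being a genuine non-corner triangle whose right side is tabulated. I would also note the bookkeeping edge cases visible in Table~\ref{tab:circuitarrayformal}, namely the primed entries such as $T_{3',1,L}^2$ and $T_{13,3',R}^7$, and remark that the prime is a notational flag (a symmetry/relabeling artifact from \cite[Definition 5.8]{EvansHendel}) that does not alter the row/column assignment, so the parity computation goes through unchanged.

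The step I expect to be the main (though modest) obstacle is handling the floor function cleanly and making the logical structure \emph{iff} rather than merely \emph{if}: the lemma as stated lists sufficient conditions, and I must be careful that the three cases (i)–(iii) are exhaustive and mutually consistent, in particular that the overlap between (i) and (ii) is deliberate rather than a gap. Concretely, the only real content is the parity split of $\lfloor (d+1)/2\rfloor$, so I would present a short two-line case analysis on the parity of $d$, verify each resulting triple against a row of Table~\ref{tab:circuitarrayformal} (e.g. column $7$, which exhibits both an $L$ and an $R$ entry per diagonal level), and conclude. No estimates, limits, or circuit transformations are needed here; the proof is purely the arithmetic of inverting the indexing map, exactly as the phrase ``follows from an inspection of Table~\ref{tab:circuitarrayformal}'' anticipates.
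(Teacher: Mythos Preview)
Your proposal is correct and matches the paper's approach exactly: the paper states that the proof ``follows from an inspection of Table~\ref{tab:circuitarrayformal},'' and your direct inversion of the indexing formula $T_{2j-1,\,j-\lfloor (i+1)/2\rfloor,\,LR}^{j}$ is precisely that inspection made explicit. Your parity split on $d$ to resolve the floor is the only real content, and it recovers cases (i)--(iii) as intended.
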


For an induction assumption we assume \eqref{equ:maineven} and \eqref{equ:mainodd} hold for all $e < E$ and proceed to prove these equations for the case $E.$ We suffice with the proof for even rows (i.e, Equation \eqref{equ:maineven}) the proof for odd rows being similar and hence omitted. The proof proceeds in a manner similar the proofs presented in the prior section.

First, by \eqref{equ:leftside3proofs}
we have
\begin{equation}\label{equ:tempfirst}
    T^{\frac{E}{2}+2+k}_{E+3+2k, 2+k, L}=
    F(T^{\frac{E}{2}+1+k}_{E+3+2k, 1+k},
    T^{\frac{E}{2}+1+k}_{E+3+2k, 2+k},
    T^{\frac{E}{2}+1+k}_{E+4+2k, 2+k})
\end{equation}

Second, utilizing assumption \eqref{equ:ebigger4} and using part (a) of the Uniform Center Theorem, in the three triangle arguments on the right hand side of \eqref{equ:tempfirst}  $E+3+2k$ and $E+4+2k$ can be replaced with $E+1+2k$ since
$$
s+d =\frac{E}{2}+1+k+1+k \ge E+1+k; \quad
\text{ and similarly }
s+d = \frac{E}{2}+1+k+2+k \ge E+1+k,$$
implying
\begin{equation}\label{equ:tempsecond}
    T^{\frac{E}{2}+2+k}_{E+3+2k, 2+k, L}=
    F(T^{\frac{E}{2}+1+k}_{E+1+2k, 1+k},
    T^{\frac{E}{2}+1+k}_{E+1+2k, 2+k},
    T^{\frac{E}{2}+1+k}_{E+1+2k, 2+k}).
\end{equation}

Third, therefore  expanding \eqref{equ:leftside3proofs} to a full 9 variable function, \eqref{equ:leftside9proofs}, and using Theorem \ref{the:uniformcenter}(c) we have that \eqref{equ:tempsecond} is expanded to
\begin{multline*}
    T^{\frac{E}{2}+2+k}_{E+3+2k, 1+k, L}=
     Y(\Delta(T^{\frac{E}{2}+1+k}_{E+1+2k,1+k,L},
     T^{\frac{E}{2}+1+k}_{E+1+2k,1+k,R},
      T^{\frac{E}{2}+1+k}_{E+1+2k,1+k,R}),\\
      \Delta(T^{\frac{E}{2}+1+k}_{E+1+2k,2+k,L},
      T^{\frac{E}{2}+1+k}_{E+1+2k,2+k,R},
      T^{\frac{E}{2}+1+k}_{E+1+2k,2+k,R}),\\
      \Delta(T^{\frac{E}{2}+1+k}_{E+1+2k,2+k,L},
      T^{\frac{E}{2}+1+k}_{E+1+2k,2+k,R},
      T^{\frac{E}{2}+1+k}_{E+1+2k,2+k,R})).
\end{multline*}

Fourth, in examining the rows to which the arguments of this last equation belong using Lemma \ref{lem:isamember} we have  
\begin{itemize}
    \item $T^{\frac{E}{2}+1+k}_{E+1+2k, 1+k, L}$ in row $E$,
    \item $T^{\frac{E}{2}+1+k}_{E+1+2k, 1+k, R}$ in row $E-1$,
    \item $T^{\frac{E}{2}+1+k}_{E+1+2k, 2+k, L}$ in row $E-2$, and
    \item $T^{\frac{E}{2}+1+k}_{E+1+2k, 2+k, R}$ in row $E-3$.
\end{itemize}

The proof is completed by the induction assumption applied to rows $E-1, E-2, E-3.$ More specifically we must show that when $k=0$ the second row element is a function of all leftmost diagonal elements. But the first argument on the right hand side  is the leftmost diagonal element of row $E$ while the induction assumption assures us that the other arguments are functions of the leftmost elements of previous rows, $e < E.$ This completes the proof.


\end{document}